\documentclass[12pt]{amsart}
\RequirePackage[sc]{mathpazo}
\counterwithin{equation}{section}
\usepackage{eucal}

\usepackage[centering,scale=0.75]{geometry}

\usepackage[all]{xy}
\usepackage{amssymb}
\usepackage{mathrsfs}
\usepackage{graphicx}
\usepackage{xcolor}
\usepackage{tikz}
\usepackage[colorlinks,plainpages,backref,
    linkcolor=blue!80!green,
    citecolor=green!60!blue,
    urlcolor=red!50!black]{hyperref}

\newtheorem{theorem}{Theorem}[section]

\newtheorem{lemma}[theorem]{Lemma}

\newtheorem{prop}[theorem]{Proposition}

\newtheorem{question}[theorem]{Question}

\theoremstyle{remark}
\newtheorem{remark}[theorem]{Remark}

\begin{document}

\title{Permutation representations on cohomology of toric varieties}

\author{Tao Gui}
\address{(Tao Gui) \newline \indent Institute for Theoretical Sciences, Westlake University, No.\ 600 Dunyu Road, Xihu District, Hangzhou, Zhejiang Province 310030, China}
\email{guitao18(at)mails(dot)ucas(dot)ac(dot)cn}

\author{Chushi Qin}
\address{(Chushi Qin)\newline \indent Institute for Interdisciplinary
Information Sciences, Tsinghua University, Beijing 100084, China}
\email{tcs24@mails.tsinghua.edu.cn}

\author{Kaizhe Shen}
\address{(Kaizhe Shen)\newline \indent Department of Mathematics, 1291 University of Oregon, 175 Prince Lucien Campbell Hall, Eugene, OR 97403-1205}
\email{kaizhe@uoregon.edu}

\author{Rui Xiong}
\address{(Rui Xiong)\newline \indent Department of Mathematics, University of Michigan, Ann Arbor, MI 48109}
\email{xiongrui@umich.edu}

\subjclass[2020]{Primary 13A50; Secondary 14M25, 20C15, 52B05, 52B15}

\keywords{Complete simplicial fans, smooth projective toric varieties, Stanley--Reisner ring, permutation representations, mirror symmetry}

\begin{abstract}
Let $G$ be a finite group acting properly by lattice automorphisms on a complete simplicial fan $\Sigma$. An open question due to Stanley asked whether the (ungraded) representation carried by the cohomology $H^*(X_{\Sigma})$ of the associated toric variety $X_{\Sigma}$ is isomorphic to a  permutation representation of $G$. We prove that Stanley's question has an affirmative answer for all smooth projective toric varieties without the properness assumption on the action. 
The proof is inspired by toric mirror symmetry.
\end{abstract}

\maketitle

\section{Introduction}

A fundamental theorem in toric geometry states that there is a correspondence between (normal) toric varieties and fans of strongly convex rational (with respect to some lattice) polyhedral cones~\cite{Fulton}. This correspondence gives a rich dictionary between the geometry of a toric variety and the combinatorics of its associated fan. For example, the toric variety is rationally smooth if and only if the associated fan is simplicial, the toric variety is smooth if and only if each cone of $\Sigma$ is generated by part of a $\mathbb{Z}$ basis of the lattice, and the toric variety is compact in the classical topology if and only if the associated fan is complete, see~\cite[Theorem 3.1.19]{CLS}. Furthermore, the toric variety is projective if and only if the associated fan is polytopal (that is, it is the normal fan of some rational convex polytope)~\cite[Theorem 6.2.1]{CLS}. 

For a complete simplicial toric variety $X_\Sigma$ associated with a complete simplicial fan $\Sigma$, the cohomology ring $H^{*}(X_\Sigma)$ of the toric variety has a particularly nice presentation using the Stanley--Reisner ring of the complete simplicial fan $\Sigma$. 

If a finite group $G$ acts on a rational fan $\Sigma$ preserving the defining lattice, then this action induces an action of $G$ on the associated toric variety $X_\Sigma$, and hence induces a graded representation of $G$ on its cohomology $H^{*}(X_\Sigma)$. For the case of a Weyl group $W$ acting on the $W$-permutohedron and the normal fan consisting of Weyl chambers, this representation has been extensively studied by Procesi~\cite{Procesi}, by Dolgachev--Lunts~\cite{DolgachevLunts}, by Stembridge~\cite{Stembridge}, and by Lehrer~\cite{Lehrer}. 
In~\cite[Theorem 1.4]{Stembridge}, Stembridge  derived a graded character formula for a finite group $G$ acting on $H^{*}(X_\Sigma)$, which is induced from the action of $G$ on the complete simplicial fan $\Sigma$, with the additional (and very strong) assumption that the action of $G$ on $\Sigma$ is \emph{proper}. By definition, an action of $G$ on $\Sigma$ is proper if, for every $g \in G$, whenever $g$ fixes a cone $\sigma$ setwise, it fixes $\sigma$ pointwise. That is, the $g$-action on
any $g$-stable cone is trivial for any $g\in G$.

More recently, the first-named author derived a general graded character formula (an equivariant version of the classical h-polynomial) for the action of any finite group $G$ on the cohomology $H^*(X_{\Sigma})$ of a complete simplicial toric variety $X_{\Sigma}$, which is induced from the action of $G$ on the complete simplicial fan $\Sigma$, without the properness assumption~\cite{Gui}.

In the 1990s, an open question due to Stanley (recorded in~\cite[Question 11.1]{Stembridge}) asked whether the representation of $G$ on the cohomology $H^{*}(X_{\Sigma})$ of a complete simplicial toric variety $X_{\Sigma}$, which is induced by a \emph{proper} action of $G$ on the complete simplicial fan $\Sigma$, is a \emph{permutation representation} of $G$. 

\begin{question}[Stanley] \label{ques-permu}
    If $\Sigma$ is a rational complete simplicial fan that carries a proper action of a finite group $G$, is the (ungraded) representation carried by $H^*(X_{\Sigma})$ isomorphic to a permutation representation of $G$~?
\end{question}

For the fans associated with Weyl chambers of Weyl groups, Stembridge~\cite{Stembridge} showed that the above question has an affirmative answer using case-by-case analysis for classical types and computer checks for exceptional types. For more general complete simplicial fans, 
an affirmative answer to the above question is only known for \emph{cyclic groups} in~\cite{Browder}.

Our main theorem answers Stanley’s question affirmatively for all smooth projective toric varieties and, in this setting, removes the properness assumption.

\begin{theorem}\label{thm:main}
Let \(\Sigma\) be a smooth polytopal $N$-rational fan, and let a finite group
\(G\) act on \(\Sigma\) preserving the lattice \(N\). Let $X_\Sigma$ be the associated smooth projective toric variety. Then there is a finite \(G\)-set \(S\)
such that, after forgetting the cohomological grading,
\[H^*(X_\Sigma;\mathbb{C})\cong\operatorname{Map}(S,\mathbb{C})\]
as a \(G\)-representation. 
\end{theorem}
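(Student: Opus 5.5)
The plan is to realize $H^*(X_\Sigma;\mathbb{C})$, as an ungraded $G$-representation, as the special fiber of a $G$-equivariant flat family of finite-dimensional commutative algebras. In this family the generic fiber is the ring of functions on a finite set of points permuted by $G$. The family comes from toric mirror symmetry: it is Batyrev's presentation of the (small) quantum cohomology, equivalently the Jacobian ring of the Hori--Vafa Landau--Ginzburg potential. Because characters of a finite group vary in a discrete set, the character is constant in a flat family of $G$-modules. The theorem will then follow with $S$ equal to the critical locus of the mirror potential.

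\emph{Step 1: a $G$-invariant polarization.} Let $v_1,\dots,v_r\in N$ be the primitive ray generators, which $G$ permutes. Since $\Sigma$ is polytopal, there is a strictly convex piecewise-linear support function $\varphi$ on $\Sigma$, integral on $N$. Averaging over $G$ and clearing denominators, we may take $\varphi$ to be $G$-invariant, so the weights $a_i=\varphi(v_i)$ are constant on $G$-orbits of rays.

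\emph{Step 2: the potential and its Jacobian ring.} Define on the dual torus $T^\vee=\operatorname{Spec}\mathbb{C}[N]$ the function
\[W_t=\sum_{i=1}^r t^{a_i}x^{v_i}.\]
Since $G$ acts on $N$ and permutes the $v_i$ while preserving the $a_i$, the group $G$ acts on $T^\vee$ and $W_t$ is $G$-invariant for every $t$. Hence $J_t=\mathbb{C}[N]/(\partial W_t)$ is a $G$-algebra. Here the partial derivatives $\partial_u W_t$, for $u\in M$, span a $G$-stable subspace, so the ideal is $G$-stable.

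Following Batyrev and Givental (and, for smooth projective but not necessarily Fano $X_\Sigma$, Iritani's version of Batyrev's ring with $\varphi$-weighted quantum Stanley--Reisner relations), I will assemble the $J_t$ into a $G$-equivariant family over $\mathbb{C}[t]$. This family should be finite and flat of rank $\chi(X_\Sigma)=\#\Sigma(n)$, with fiber at $t=0$ equal to the Stanley--Reisner presentation $\mathbb{C}[x_1,\dots,x_r]/(I_\Sigma+J)$, that is, $H^*(X_\Sigma;\mathbb{C})$. Strict convexity of $\varphi$ is exactly what makes the quantum relations degenerate to the monomial relations $I_\Sigma$ as $t\to 0$. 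Finiteness and flatness should come from a Gr\"obner/filtration argument using the grading by $\varphi$. The rank count is Kouchnirenko's theorem, since $\operatorname{conv}(v_i)$ contains $0$ in its interior and its normalized volume is $\#\Sigma(n)$ for a smooth fan. All maps are $G$-equivariant, so the character of $J_t$ is independent of $t$. In particular it equals the character of $H^*(X_\Sigma)$.

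\emph{Step 3: semisimplicity, the main obstacle.} I must show that for some $t\neq 0$ all critical points of $W_t$ are nondegenerate. This is harder than usual because we cannot perturb coefficients generically: the $G$-invariance confines us to the one-parameter slice above. Once it holds, $J_t\cong\operatorname{Map}(S,\mathbb{C})$ with $S=\operatorname{Crit}(W_t)$, a finite $G$-set of cardinality $\dim H^*$, and the proof is complete.

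Since the discriminant is algebraic in $t$, it suffices to check nondegeneracy for $t$ near $0$. I will first try a tropical/valuation analysis. Over the Puiseux field in $t$, the critical point equations $\sum_i t^{a_i}x^{v_i}v_i=0$ have solutions whose valuations are governed by the polytope $P_\varphi$ dual to $\varphi$. Each maximal cone $\sigma$ (a vertex of $P_\varphi$) should contribute exactly $|\det(v_i)_{i\in\sigma}|=1$ critical points. These are obtained by rescaling $x$ so that the leading-order equations become $\sum_{i\in\sigma}c_i x^{v_i}v_i=0$ on a torus where the $v_i$, $i\in\sigma$, form a basis. Smoothness gives a single, visibly nondegenerate solution, which then lifts by Hensel's lemma.

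If the leading-order system is degenerate because other rays enter at the same valuation, I would instead invoke Iritani's generic semisimplicity of toric quantum cohomology near the large radius limit along an ample direction. The $G$-invariant class $\varphi$ is such a direction.
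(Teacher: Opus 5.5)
Your proposal has two genuine gaps. Step 3 is the crux, and Step 2 is also false as stated. The paper's proof is built precisely to avoid both.

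\textbf{Step 3: semisimplicity along a $G$-invariant line.} This is false in general.
\begin{itemize}
\item For a toric Fano manifold, the anticanonical support function ($\varphi(v_i)=1$ for all $i$) is strictly convex and automatically $G$-invariant. For it $W_t=tW_1$, so $J_t=J(W_1)$ is the monotone small quantum cohomology for every $t\neq0$. Ostrover--Tyomkin give toric Fano examples where small quantum cohomology is not semisimple (as the paper notes).
\item If $\operatorname{Pic}(X_\Sigma)^G_{\mathbb{Q}}$ is spanned by $K_{X_\Sigma}$, every admissible $\varphi$ reduces to this case after a $G$-equivariant torus translation. So no choice of invariant polarization helps.
\item Iritani's generic semisimplicity is genericity over the whole K\"ahler moduli space. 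It says nothing about a prescribed line.
\item Your tropical analysis breaks at its first step. If $\{v_i\}_{i\in\sigma}$ is a basis, the initial system $\sum_{i\in\sigma}c_ix^{v_i}v_i=0$ has no solution on the torus. Already for $\mathbb{P}^1$, both critical points of $t^{a_1}x+t^{a_2}x^{-1}$ come from the segment $[-1,1]$, not from a cone.
\item Critical valuations are governed by the regular subdivision of $\operatorname{conv}(v_1,\dots,v_r)$ with heights $\varphi(v_i)$. $G$-invariant heights are non-generic, so the cells need not be unimodular simplices, and their initial systems can have degenerate solutions.
\end{itemize}
The missing idea is that no semisimplicity is needed. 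Group the relevant critical points into $G$-orbits, so the fiber becomes $\bigoplus_i\operatorname{Ind}_{H_i}^G J_{p_i}(\mathcal W)$. Then each local Jacobian algebra is a permutation representation of its stabilizer $H_i$, by Roberts' $H_i$-invariant Morse approximation together with Montaldi's equivariant splitting. This applies because $T_{p_i}T^\vee\cong M\otimes_{\mathbb Z}\mathbb C$ is the complexification of the real representation $M_{\mathbb R}$.

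\textbf{Step 2: the finite flat family of rank $\chi(X_\Sigma)$.} This fails outside the weak Fano case. The integer $\#\Sigma(n)$ is the total normalized volume of the unimodular simplices $\operatorname{conv}(0,v_i:i\in\sigma)$. These fill $\operatorname{conv}(v_1,\dots,v_r)$ only when their union is convex. For $\mathbb{F}_a$ with $a\geq3$ (rays $(1,0),(0,1),(-1,a),(0,-1)$), $\operatorname{conv}(v_i)$ is the triangle with vertices $(1,0),(-1,a),(0,-1)$, of normalized area $a+2>4$. Indeed $J_{t_0}$ has dimension $a+2$ for generic $t_0\neq0$: the surplus critical points escape to infinity as $t\to0$. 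Batyrev's presentation behaves the same way. So no family over $\mathbb{C}[t]$ with fibers $J_t$ and special fiber $H^*(X_\Sigma)$ is finite and flat, and $\operatorname{Crit}(W_t)$ is too big to serve as $S$.

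The paper instead works with the Rees-type ring $A_\Sigma$ and proves flatness only at the point $\mathfrak m$, where $t,\theta_{m_1},\dots,\theta_{m_d}$ is a regular sequence by equivariant formality. It then passes to an \'etale neighbourhood of $t=0$ over which the family splits off a finite flat branch $V$. This branch is $G$-stable by uniqueness of the splitting. Only the critical points lying in $V_{u_0}$ are compared with $H^*(X_\Sigma)$, and the local permutation argument above is applied to exactly these.
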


We remark that the $\mathbb{Q}$-version is also true by a standard Noether--Deuring argument; see for example,~\cite[Theorem~(29.7)]{CurtisReiner}. 

The conclusion is necessarily ungraded. Even for fans of Weyl chambers, the
cohomological grading is not always compatible with a permutation basis
\cite{Stembridge}.

A bit surprisingly, the idea of the proof is based on \emph{mirror symmetry}. 
The quantum cohomology $QH^*(X_{\Sigma})$ provides a flat deformation of ordinary cohomology $H^*(X_\Sigma)$. By choosing a $G$-equivariant direction of K\"ahler parameters, we are able to get a $\mathbb{C}$-family of $G$-algebras, whose zero fiber is $H^*(X_\Sigma)$ and generic fiber is typically semisimple. 
If the semisimplicity were known, the generic fiber would be the coordinate ring of some finite $G$-set, thus a permutation representation (see Proposition \ref{prop:semisimple}). 

However, there are two obstructions to making this argument work in general: 
\begin{itemize}
    \item semisimplicity is sensitive to specialization, a specific choice of $G$-invariant direction need not meet the semisimple locus in the required way; 
    \item in general, even in the Fano case, the (small) quantum cohomology is not generally semisimple;
\end{itemize}
see~\cite{OstroverTyomkin} for explicit examples. 
That is how mirror symmetry comes into the picture. 
Mirror symmetry predicts that the enumerative geometry of a smooth projective variety (quantum cohomology) is encoded by the singularity theory of a mirror Landau--Ginzburg model (Jacobian algebra). Givental's mirror theorem~\cite{Givental} confirms this principle when the toric variety $X_{\Sigma}$ is Fano. But the naive uncorrected Jacobian presentation need not be equal to the small quantum cohomology outside the Fano setting.
In order to prove our main theorem (Theorem \ref{thm:main}) in full generality, we therefore develop a purely combinatorial replacement for the mirror construction of the Landau--Ginzburg potential, and use commutative algebra to establish the required flatness and finiteness properties. With the help of equivariant Morse approximation of Roberts~\cite{Roberts}, we can directly conclude that a large class of local Jacobian algebras in our construction is actually a permutation representation (Theorem \ref{thm:Jacobiperm}) and obtain the desired permutation representation for all smooth projective toric varieties.

Finally, the finite $G$-set constructed in the proof satisfies
\begin{equation*}
  |S^g|=\chi(X_\Sigma^g), \quad \text{ for any $g\in G,$ }
\end{equation*}
where $\chi(X_\Sigma^g)$ is the Euler characteristic of the fixed locus $X_\Sigma^g$. We prove this identity directly and compare it with the specialization at
$q=1$ of Gui's equivariant $h$-polynomial formula in~\cite{Gui}. The fixed-point
formula explains the character of $S$; the deformation and invariant Morse
approximation establish the existence of the actual finite $G$-set.

The paper is organized as follows.  Section~\ref{sec:2} records the required facts about toric varieties and permutation representations. 
Section~\ref{sec:3} completes the proof of
Theorem~\ref{thm:main} and explains the mirror-symmetric meaning of the
construction. Section~\ref{sec:4} studies fixed-point counts and compares them with the
equivariant character formula.

\subsection*{Acknowledgments}
We thank Changzheng Li and John Stembridge for valuable discussions and communications. 
This research began at the PKU Algebraic Combinatorics Experience (PACE 2026)
at the Beijing International Center for Mathematical Research, Peking
University. We thank Yibo Gao for organizing this program and bringing us
together. The first author is supported in part by NSFC: 12471309.

\subsection*{AI disclosure}
ChatGPT was used during the preparation of the manuscript as a technical assistant for checking calculations and arguments, locating relevant references, improving the exposition and identifying mathematical and typographical errors. The mathematical ideas, conceptual framework, and proof strategies were developed by the authors. All AI-assisted content was carefully verified by the authors, who take full responsibility for the manuscript.

\section{Preliminaries}\label{sec:2}

\subsection{Fans and toric varieties}
We fix notation on fans and toric varieties that will be used throughout the paper. A more thorough background on fans and toric varieties can be found in~\cite{CLS,Fulton}.

Fix a complex algebraic torus $T$ of rank $d$. 
Denote its cocharacter and character lattices by 
$$N=X_*(T),\qquad M=X^*(T)=\operatorname{Hom}(N,\mathbb{Z}).$$
Let $\Sigma\subseteq N_{\mathbb{R}}:=N\otimes_\mathbb{Z}\mathbb{R}$ be a smooth (that is, each cone of $\Sigma$ is generated by part of a $\mathbb{Z}$ basis of the lattice $N$) polytopal fan.  We denote its set of
rays by $\Sigma(1)$ and the primitive generator of
$\rho\in\Sigma(1)$ by $v_\rho\in N$.  The associated toric variety is denoted
by $X_\Sigma$. 
Recall that smoothness of $\Sigma$ is equivalent to smoothness
of $X_\Sigma$, and $\Sigma$ is polytopal if and only if $X_\Sigma$ is
projective; see~\cite[Chapters~3 and~6]{CLS} or~\cite{Fulton}.

Let a finite group $G$ act on $N$ by lattice automorphisms preserving
$\Sigma$. It acts contragrediently on $M$ by
\begin{equation*}
  (gm)(u)=m(g^{-1}u),
\end{equation*} and it induces an  algebraic action of $G$ on $X_\Sigma$.  

\subsection{Permutation representation}
Let $G$ be a finite group. 
We say that a complex $G$-representation $V$ is a \emph{permutation representation} if as representations 
\begin{equation*} 
   V\simeq \operatorname{Map}(S,\mathbb{C}) 
\end{equation*}
for a finite $G$-set $S$. 
Note that being a permutation representation is a strong restriction. For example, for any $g\in G$, the character 
$$\chi_V(g)=|S^g|:=\#\{s\in S: gs=s\}\in \mathbb{Z}_{\geq 0}.$$

\begin{lemma}\label{lem:permisind}
Permutation representations are closed under direct sums and inductions. 
\end{lemma}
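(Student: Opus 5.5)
The plan is to prove both closure properties directly at the level of $G$-sets, using the defining isomorphism $V\simeq\operatorname{Map}(S,\mathbb{C})$.

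\textbf{Direct sums.} Suppose $V\simeq\operatorname{Map}(S,\mathbb{C})$ and $W\simeq\operatorname{Map}(T,\mathbb{C})$ for finite $G$-sets $S,T$. The disjoint union $S\sqcup T$ is a $G$-set. Restricting a function to $S$ and to $T$ gives a $G$-equivariant isomorphism
\[
\operatorname{Map}(S\sqcup T,\mathbb{C})\cong\operatorname{Map}(S,\mathbb{C})\oplus\operatorname{Map}(T,\mathbb{C}).
\]
Hence $V\oplus W$ is a permutation representation. By induction on the number of summands, the same holds for any finite direct sum.

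\textbf{Induction.} Let $H\le G$ be a subgroup and $V\simeq\operatorname{Map}(S,\mathbb{C})$ with $S$ a finite $H$-set. Form the induced $G$-set
\[
G\times_H S=(G\times S)/\bigl((gh,s)\sim(g,hs)\bigr),
\]
with $G$ acting by left multiplication on the first factor. I claim that
\[
\operatorname{Ind}_H^G\operatorname{Map}(S,\mathbb{C})\cong\operatorname{Map}(G\times_H S,\mathbb{C}).
\]

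To prove the claim, first note that $\operatorname{Map}(S,\mathbb{C})$ has basis the delta functions $\delta_s$, and $h\delta_s=\delta_{hs}$. So it is the linearization $\mathbb{C}[S]$, and likewise $\operatorname{Map}(G\times_H S,\mathbb{C})\cong\mathbb{C}[G\times_H S]$. Next, $\operatorname{Ind}_H^G V=\mathbb{C}[G]\otimes_{\mathbb{C}[H]}V$. Define a map by
\[
g\otimes\delta_s\mapsto\delta_{[g,s]}.
\]
It is well defined because $gh\otimes\delta_s=g\otimes\delta_{hs}$ matches $[gh,s]=[g,hs]$. It is $G$-equivariant and surjective. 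Choose coset representatives $g_1,\dots,g_k$ for $G/H$. Then $\{g_i\otimes\delta_s\}$ is a basis of the source, and its image is the set of distinct points $\{[g_i,s]\}$ of $G\times_H S$. Both sides therefore have dimension $[G:H]\,|S|$, so the map is a bijection.

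The only point needing care is the identification $\operatorname{Map}(S,\mathbb{C})\cong\mathbb{C}[S]$ as $G$-representations. With the action $(gf)(s)=f(g^{-1}s)$ one checks $g\delta_s=\delta_{gs}$, so it holds. There is no real obstacle beyond this bookkeeping.
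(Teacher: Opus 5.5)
Your proof is correct and follows the same route as the paper, which simply records the two isomorphisms $\operatorname{Map}(S_1,\mathbb{C})\oplus\operatorname{Map}(S_2,\mathbb{C})\simeq\operatorname{Map}(S_1\sqcup S_2,\mathbb{C})$ and $\operatorname{Ind}_H^G\operatorname{Map}(S,\mathbb{C})\simeq\operatorname{Map}(G\times_H S,\mathbb{C})$. Your argument via delta functions and coset representatives fills in the verification of the second isomorphism, which the paper leaves implicit.
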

\begin{proof}
It follows from the following identities
$$
\operatorname{Map}(S_1,\mathbb{C})\oplus 
\operatorname{Map}(S_2,\mathbb{C})\simeq 
\operatorname{Map}(S_1\sqcup S_2,\mathbb{C}),$$
$$\operatorname{Ind}_H^G(\operatorname{Map}(S,\mathbb{C}))
\simeq \operatorname{Map}(G\times_H S,\mathbb{C}),$$
where $G\times_H S =G\times S\big/ \langle (gh,s)\sim (g,hs):h\in H\rangle $. 
\end{proof}

We need the following observation. 

\begin{prop}\label{prop:semisimple}
Assume that $G$ acts by algebra automorphisms on a finite-dimensional commutative $\mathbb{C}$-algebra $R$. 
If $R$ is semisimple, then $R$ is a permutation representation of $G$. 
\end{prop}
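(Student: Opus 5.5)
The plan is to use the structure theorem for finite-dimensional commutative semisimple algebras over the algebraically closed field $\mathbb{C}$. Since $R$ is commutative, finite-dimensional and semisimple, Artin--Wedderburn gives $R\cong \mathbb{C}^n=\operatorname{Map}(S,\mathbb{C})$ as algebras, where $S=\operatorname{Spec}R=\operatorname{Hom}_{\mathbb{C}\text{-alg}}(R,\mathbb{C})$ is a finite set of cardinality $n=\dim R$. Concretely, $R$ has a basis of primitive orthogonal idempotents $e_1,\dots,e_n$ with $e_ie_j=\delta_{ij}e_i$ and $\sum_i e_i=1$, and these correspond bijectively to the points of $S$ (the character $\phi_i$ with $\phi_i(e_j)=\delta_{ij}$).

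The key step is to check that $G$ permutes this basis. Each $g\in G$ acts by an algebra automorphism, so $g$ sends idempotents to idempotents, preserves orthogonality, and preserves primitivity: if $ge_i=a+b$ with $a,b$ nonzero orthogonal idempotents, then applying $g^{-1}$ would decompose $e_i$. Hence $g$ maps the set $\{e_1,\dots,e_n\}$ into itself, and since $g$ is invertible it is a permutation. One can also argue more intrinsically: the primitive idempotents of a commutative semisimple algebra are exactly the nonzero idempotents $e$ with $eR$ one-dimensional, a description preserved by automorphisms. Equivalently, $G$ acts on $S=\operatorname{Hom}_{\mathbb{C}\text{-alg}}(R,\mathbb{C})$ by $(g\phi)(r)=\phi(g^{-1}r)$, and the evaluation isomorphism $R\to\operatorname{Map}(S,\mathbb{C})$, $r\mapsto(\phi\mapsto\phi(r))$, is then $G$-equivariant.

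With the action on the basis established, the linear map $R\to\operatorname{Map}(S,\mathbb{C})$ sending $e_i$ to the indicator function of the corresponding point is a $G$-equivariant isomorphism, which is exactly the required statement. There is no real obstacle here. The only point needing care is the matching of conventions, namely that $\operatorname{Map}(S,\mathbb{C})$ carries the action $(gf)(s)=f(g^{-1}s)$, under which indicator functions are permuted as $g\delta_s=\delta_{gs}$. Choosing the contragredient action on $S$ as above makes the evaluation map equivariant.
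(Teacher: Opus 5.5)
Your proof is correct and takes the same route as the paper: identify $R\cong\mathbb{C}^n$ with $\operatorname{Map}(S,\mathbb{C})$, where $S=\operatorname{Spec}R=\operatorname{Hom}_{\mathbb{C}\text{-alg}}(R,\mathbb{C})$ is a finite $G$-set, via evaluation. You spell out the equivariance check (that automorphisms permute the primitive idempotents, and that the contragredient action on $S$ makes evaluation $G$-equivariant), which the paper leaves implicit.
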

\begin{proof}
A finite-dimensional commutative semisimple $\mathbb{C}$-algebra is a product of
copies of $\mathbb{C}$. Thus the spectrum $\operatorname{Spec}R$ is a finite reduced $G$-scheme, or equivalently, a discrete set of points with $G$-action. 
Hence $R\cong \operatorname{Map}(\operatorname{Spec}R,\mathbb{C})$ is a permutation representation.
\end{proof}

We also need the following fact about families of representations. 

\begin{lemma}\label{lem:deformationlem}
Let $B$ be a connected scheme locally of finite type over $\mathbb{C}$, with trivial $G$-action. 
Let $\mathcal{E}$ be a finite rank $G$-equivariant vector bundle on $B$. 
Then all closed fibers of $\mathcal{E}$ are isomorphic as $G$-representations. 
\end{lemma}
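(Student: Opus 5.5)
The plan is to reduce the statement to the fact that characters are locally constant in a continuous family, and that characters of a finite group take values in a discrete set. Fix $g\in G$. On a finite rank $G$-equivariant vector bundle $\mathcal{E}$ over $B$ (with $G$ acting trivially on $B$), $g$ acts by an $\mathcal{O}_B$-linear automorphism $\phi_g$ of $\mathcal{E}$. Hence its trace $t_g:=\operatorname{tr}(\phi_g)\in\Gamma(B,\mathcal{O}_B)$ is a regular function, which we compute locally on any trivializing open cover. At a closed point $b\in B$, the value $t_g(b)$ is the character $\chi_{\mathcal{E}_b}(g)$ of the fiber representation $\mathcal{E}_b=\mathcal{E}\otimes k(b)$. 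It therefore suffices to show that each $t_g$ is constant on closed points. Since complex representations of the finite group $G$ are determined up to isomorphism by their characters, this gives the lemma.

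To see that $t_g$ is constant, first observe that $\phi_g^{|G|}=\mathrm{id}$. So at every closed point $b$ the eigenvalues of $g$ on $\mathcal{E}_b$ are $|G|$-th roots of unity. Since the rank is locally constant and $B$ is connected, the rank is a constant $r$. Thus $t_g(b)$ lies in the finite set $F$ of sums of $r$ roots of unity of order dividing $|G|$. Consider $P(t)=\prod_{c\in F}(t-c)$. Then $P(t_g)$ is a regular function vanishing at all closed points, so it is nilpotent, because $B$ is locally of finite type over $\mathbb{C}$ (Jacobson, with reduced structure detected at closed points).

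Next decompose $\mathcal{O}_B$ via the idempotents attached to the distinct roots of $P$. Since $P$ has simple roots, the Lagrange interpolation polynomials $e_c(t)$ satisfy $\sum_c e_c=1$ and $e_ce_{c'}\equiv\delta_{cc'}e_c \pmod{P}$. Applying them to $t_g$ gives elements that are idempotent modulo nilpotents. These lift uniquely to genuine idempotents of $\Gamma(B,\mathcal{O}_B)$; the lifting can be done locally, and it glues by uniqueness. Because $B$ is connected, each such idempotent is $0$ or $1$, so exactly one $c\in F$ has $e_c(t_g)$ invertible. Then $t_g(b)=c$ for every closed point $b$.

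Alternatively, one can argue with the reduced variety in the classical topology: $t_g$ is a continuous map from the connected space $B(\mathbb{C})$ to the discrete set $F$. Connectedness of $B(\mathbb{C})$ in the analytic topology follows from connectedness in the Zariski topology for schemes locally of finite type, which is a standard fact.

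The main obstacle is the connectedness and nilpotent bookkeeping: $B$ need not be reduced or of finite type globally. The idempotent-lifting argument handles this cleanly. Everything else is formal: the trace of a locally free endomorphism is well defined, and character theory identifies representations with equal characters.
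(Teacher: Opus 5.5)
Your proof is correct, but it takes a different route from the paper's one-line argument. The paper works with the bundle itself rather than with scalar functions. Because $G$ acts trivially on $B$, the isotypic projectors $e_\chi=\frac{\dim\chi}{|G|}\sum_{g\in G}\overline{\chi(g)}\,g$ are $\mathcal{O}_B$-linear idempotent endomorphisms of $\mathcal{E}$. Each isotypic part $\mathcal{E}_\chi=e_\chi\mathcal{E}$ is therefore a direct summand of a locally free sheaf, hence locally free, hence of constant rank on the connected base. Since $e_\chi$ commutes with base change, the multiplicity of $\chi$ in every closed fiber is $\operatorname{rank}\mathcal{E}_\chi/\dim\chi$.

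You instead show that each trace function $t_g$ is constant by combining integrality (values in the finite set $F$) with connectedness. This forces you through the Jacobson property (vanishing at all closed points implies nilpotence) and through lifting idempotents modulo the nilradical. The paper's route is shorter. It uses the hypothesis on $B$ only to ensure that closed fibers are complex vector spaces, because the rank of a locally free sheaf is locally constant on any scheme, reduced or not. So the ``main obstacle'' you identify does not arise there.

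The two arguments are closely linked. The combination $\operatorname{tr}(e_\chi)=\frac{\dim\chi}{|G|}\sum_{g}\overline{\chi(g)}\,t_g$ is the trace of an idempotent on a locally free sheaf, i.e.\ the rank of its image. It is therefore automatically a locally constant integer, and working with it would have let you bypass the nilpotent bookkeeping. What your version makes explicit is the general ``finitely many possible values plus connectedness'' rigidity mechanism.

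One small point of precision: if $B$ is not quasi-compact, $P(t_g)$ is only locally nilpotent, not necessarily nilpotent in $\Gamma(B,\mathcal{O}_B)$. Your local lift-and-glue step already handles this, so nothing breaks.
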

\begin{proof}
This is because the isotypic component $\mathcal{E}_\chi$ for any irreducible representation $\chi$ of $G$ is a direct summand of $\mathcal{E}$, hence again a vector bundle on the connected base $B$.
\end{proof}

Let $(\mathfrak{X},p)$ be a smooth complex analytic (resp. algebraic) germ of dimension $n$. For $f\in\mathcal{O}_{\mathfrak X,p}$, we define the \emph{local Jacobian algebra}
\begin{equation} \label{eq:local-jacobian}
  J_p(f) = 
{\mathcal{O}_{\mathfrak{X},p}}\big/
{\left< \frac{\partial f}{\partial z_1},\ldots,\frac{\partial f}{\partial z_n}\right>},
\end{equation}
where $z_1,\ldots,z_n$ are local coordinates. Intrinsically, the ideal in
the denominator is
\[
  \{\xi(f):\xi\in\operatorname{Der}_\mathbb{C}(\mathcal{O}_{\mathfrak X,p})\}.
\]
Thus the definition is coordinate-independent. 
We say that $f$ has an isolated critical point at $p$ if $df(p)=0$ and $\dim_\mathbb{C} J_p(f)<\infty$. 

\begin{theorem}[Local permutation theorem]\label{thm:Jacobiperm}
Assume that a finite group $G$ acts holomorphically (resp., algebraically) on a smooth complex germ $(\mathfrak{X},p)$. 
Suppose that the tangent space $T_p\mathfrak{X}$ is the complexification of a real $G$-representation. If $f\in \mathcal{O}_{\mathfrak{X},p}$ is $G$-invariant and has an isolated critical point at $p$, then 
$J_p(f)$ is a permutation representation of $G$. 
\end{theorem}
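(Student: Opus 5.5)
We will deform $f$ equivariantly to a $G$-invariant function with only nondegenerate (Morse) critical points near $p$, and then compare the local Jacobian algebra with the semisimple algebra of the deformed function.

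\emph{Linearization.} By averaging (Cartan's lemma), choose $G$-equivariant local coordinates $z_1,\dots,z_n$ at $p$. Then the $G$-action on $(\mathfrak X,p)$ is linear, given by the representation $V=T_p\mathfrak X$. By hypothesis $V=V_{\mathbb R}\otimes\mathbb C$ for a real representation $V_{\mathbb R}$, so we may take the coordinates to be real-structured. Since $\dim J_p(f)<\infty$, the function $f$ is finitely determined. Replacing $f$ by its Taylor polynomial of high degree, averaged over $G$, changes neither the isomorphism class of $J_p(f)$ nor its $G$-structure; for the latter, use the equivariant finite determinacy/Mather argument. So we may assume $f$ is a $G$-invariant polynomial on $V$.

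\emph{Equivariant deformation.} Consider the family $f_t=f+t\,h$, where $h$ ranges over $G$-invariant polynomials of bounded degree. Choose a small ball $U$ around $p$ and small $t$. The algebra
\[
A_t=\mathcal O(U)\big/\langle \partial f_t\rangle
\]
is the direct sum of the local Jacobian algebras of $f_t$ at its critical points in $U$. By conservation of Milnor number, $A_t$ forms a finite flat (free) family over a small disc in $t$, with $A_0=J_p(f)$. The $G$-action commutes with $\partial$ up to the representation on $V^\ast$, which preserves the Jacobian ideal of an invariant function, so $G$ acts by algebra automorphisms on each fiber. Lemma~\ref{lem:deformationlem}, applied to a connected base, then shows that $A_0\cong A_t$ as $G$-representations.

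\emph{Morse approximation.} Suppose some invariant $h$ makes every critical point of $f_t$ in $U$ nondegenerate for small $t\neq0$. Then $A_t$ is a product of copies of $\mathbb C$, hence semisimple, and Proposition~\ref{prop:semisimple} gives that $A_t$, and so $J_p(f)$, is a permutation representation. The natural choice is $h=\sum_i x_i^2$, where $x_i$ are real coordinates orthonormal for a $G$-invariant inner product on $V_{\mathbb R}$, i.e.\ $h=\langle z,z\rangle$, together with higher invariant perturbations if needed. This is where the hypothesis that $V$ is of real type enters: it ensures the invariant quadratic form $\langle z,z\rangle$ is nondegenerate on all of $V$. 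Without it, for instance for $\mathbb Z/3$ acting on $\mathbb C$ by a cube root of unity, no invariant quadratic exists, and indeed $J_0(z^3)$ is not a permutation representation.

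\emph{Main obstacle.} The hardest step is showing that a generic invariant deformation is Morse at every critical point. This includes critical points lying on fixed loci of subgroups, where invariance forces structure on the Hessian. Here we invoke Roberts' equivariant Morse approximation~\cite{Roberts}: for a real (or complexified-real) representation, generic $G$-invariant functions are $G$-Morse. Concretely, at a critical point $q$ with stabilizer $H$, the Hessian is $H$-invariant on $T_q$, which is again of real type, so it can be made nondegenerate by adding an $H$-invariant (and then $G$-averaged) quadratic term near $q$.

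If only $G$-Morse genericity is available, in the sense of nondegenerate Hessians, this already suffices, since nondegenerate critical points give reduced points. Alternatively, one can argue inductively on $\mu=\dim J_p(f)$. Decompose $A_t$ along orbits of critical points: each orbit contributes $\operatorname{Ind}_H^G J_q(f_t)$ with $\mu_q<\mu$, provided the deformation splits off at least one critical point. Then Lemma~\ref{lem:permisind} together with the induction hypothesis applied to $H$ acting on $(\mathfrak X,q)$ finishes the argument. The remaining case, where $p$ stays the unique critical point for every invariant deformation, must be ruled out using the real invariant quadratic form $\langle z,z\rangle$.
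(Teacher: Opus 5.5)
Your main line is correct and is essentially the paper's proof: linearize by averaging, take a $G$-invariant Morse deformation from Roberts' theorem, identify $J_p(f)$ $G$-equivariantly with $\prod_q J_q(f_t)$, and finish with Proposition~\ref{prop:semisimple}. The one difference is that the paper quotes Montaldi's theorem for the identification, while you derive it from the finite flat relative Jacobian family and Lemma~\ref{lem:deformationlem}; your reduction to a polynomial $f$ is not needed.

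Your inductive alternative in fact closes immediately and removes the dependence on Roberts. If $\mu\ge 2$, then for small $t\neq 0$ the Hessian of $f+t\langle z,z\rangle$ at $p$ is $\operatorname{Hess}_p f+2t\langle\cdot,\cdot\rangle$, whose determinant is a nonzero polynomial in $t$, so $p$ becomes a Morse point. Conservation then pushes the remaining Milnor number $\mu-1\geq 1$ onto other critical points $q$ with $\mu_q<\mu$, and the induction applies to the stabilizers $G_q$, whose tangent representations $V|_{G_q}$ are still of real type; the base case $\mu=1$ is the trivial one-dimensional representation.
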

\begin{proof}
In the algebraic case, analytification identifies the finite-length local Jacobian algebra with its analytic counterpart, equivariantly for the group action; thus it suffices to prove the analytic assertion.

Choose local coordinates \(\xi:(\mathfrak X,p)\to(T_p\mathfrak X,0)\) with \(d\xi_p=\mathrm{id}\), and replace \(\xi\) by the averaged map over $G$. Then the action is linearized analytically and we can assume $G$-equivariantly $(\mathfrak{X},p)=(T_p\mathfrak{X},0)$. When $T_p\mathfrak{X}$ is a complexification of a real $G$-representation, Roberts's invariant Morse approximation theorem~\cite[Theorem 4.1]{Roberts} states that $f$ admits a $G$-invariant Morse deformation $f_t$ for $t$ defined on a neighbourhood of $0$ such that $f_0=f$ and, for $t\neq 0$, $f_t$ has only non-degenerate critical points. 
Fix a sufficiently small $G$-invariant Milnor neighborhood $B$ and take a sufficiently small generic parameter $t$. Let 
$$\operatorname{Crit}(t) = \{q \in B : d f_t(q) = 0\},$$
which carries an action of $G$.
Montaldi's theorem~\cite[Theorem 4.2 and equation (4.1)]{Montaldi} states that, as representations of $G$, 
\begin{equation}\label{eq:deformJalg}
J_p(f)\simeq \prod_{q\in \operatorname{Crit}(t)}J_q(f_t).
\end{equation}

Since $f_t$ has only non-degenerate critical points, each $J_q(f_t)$ is one-dimensional, thus the right-hand side of \eqref{eq:deformJalg} is semisimple.
By Proposition \ref{prop:semisimple}, it is a permutation representation of $G$. See also~\cite[Theorem 5.1]{Roberts}.
\end{proof}

\begin{remark}
The realness hypothesis cannot be omitted.  Let $G=\mathbb{Z}/3\mathbb{Z}$ act on $\mathbb{C}$ by
$z\mapsto\zeta z$, where $\zeta$ is a primitive cube root of unity.  The
function $f(z)=z^3$ is invariant, but
\[
  J_0(f)=\mathbb{C}\{z\}/\langle z^2\rangle\simeq\mathbb{C}[z]/\langle z^2\rangle
\]
has character $1+\zeta^{-1}\notin \mathbb{Z}_{\geq 0}$ and is not a permutation representation of $G$.
\end{remark}

\section{Proof of Theorem \ref{thm:main}}\label{sec:3}
The projectivity assumption enters through the following elementary
construction.
\begin{lemma}
There exists a full-dimensional $G$-invariant lattice polytope $P\subset M_{\mathbb{R}}:=M\otimes_\mathbb{Z}\mathbb{R}$, containing $0$ in its interior, whose normal fan is $\Sigma$. 
\end{lemma}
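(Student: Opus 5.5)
Since $\Sigma$ is polytopal, there is some full-dimensional rational polytope $P_0\subset M_{\mathbb{R}}$ whose normal fan is $\Sigma$. The plan is to average $P_0$ over $G$, translate it, and scale it. Clearing denominators, we may assume $P_0$ is a lattice polytope. We may also assume it is given by inequalities
\[
P_0=\{m\in M_{\mathbb{R}} : \langle m,v_\rho\rangle\ge -a_\rho \ \text{for all } \rho\in\Sigma(1)\},
\]
with $a_\rho\in\mathbb{Z}$. These data correspond to a strictly convex support function $\varphi_0$ on $\Sigma$, linear on each cone, with $\varphi_0(v_\rho)=-a_\rho$.

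\textbf{Step 1: average.} Define $\varphi=\sum_{g\in G} g\cdot\varphi_0$, where $(g\cdot\varphi_0)(u)=\varphi_0(g^{-1}u)$. Each $g\cdot\varphi_0$ is again piecewise linear on $\Sigma$, because $G$ permutes the cones, and it takes integer values on $N$, because $G$ preserves $N$. It is also strictly convex with respect to $\Sigma$: strict convexity means "convex, and the linear pieces on distinct maximal cones are distinct", and both conditions are preserved by composing with a fan automorphism. A sum of functions that are convex and piecewise linear on $\Sigma$ is again such a function. Strict convexity survives the sum because, across each wall, every summand satisfies the wall inequality and at least one (in fact all) satisfies it strictly. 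So $\varphi$ is a $G$-invariant strictly convex integral support function. Its polytope $P=\{m:\langle m,u\rangle\ge\varphi(u)\ \text{for all } u\}$ is $G$-invariant, has normal fan $\Sigma$, and is a lattice polytope. The lattice property uses smoothness: the vertex for a maximal cone $\sigma$ is the linear function $m_\sigma$ agreeing with $\varphi$ on $\sigma$, and since the $v_\rho$ with $\rho\in\sigma(1)$ form a $\mathbb{Z}$-basis and $\varphi(v_\rho)\in\mathbb{Z}$, we get $m_\sigma\in M$. Equivalently, $P$ is the Minkowski sum $\sum_g gP_0$.

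\textbf{Step 2: move $0$ into the interior.} Take the barycenter of the vertices, $c=\frac{1}{|V|}\sum_{v\in V} v$. It is $G$-fixed and lies in the interior of the full-dimensional polytope $P$, but it is only rational. Replacing $P$ by $kP$, with $k=|V|$ or any multiple making $kc\in M$, keeps everything $G$-invariant with normal fan $\Sigma$. Then $kP-kc$ is a $G$-invariant lattice polytope containing $0$ in its interior, because translation by the $G$-fixed lattice vector $kc$ commutes with the $G$-action.

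\textbf{Main obstacle.} The only step needing care is checking that strict convexity, and hence the statement "normal fan is exactly $\Sigma$", survives the $G$-averaging. I expect this to be handled cleanly by the Minkowski-sum description: the normal fan of a Minkowski sum is the common refinement of the normal fans of the summands, and every $gP_0$ has normal fan $g\Sigma=\Sigma$.
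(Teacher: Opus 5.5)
Your proposal is correct and takes essentially the paper's approach: the paper also forms the Minkowski sum $P=\sum_{g\in G} gP_0$ and argues that its normal fan is still $\Sigma$ because the normal fan of a Minkowski sum is the common refinement of the summands' normal fans (citing Ziegler, Prop.~7.12). The only difference is the order of steps. The paper first translates a rational interior point of $P_0$ to the origin and dilates, so every $gP_0$ already contains $0$ in its interior and hence so does the sum; you instead translate afterwards by the $G$-fixed barycenter of the vertices, which works equally well.
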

\begin{proof}
Since $\Sigma$ is polytopal, there is a full-dimensional rational polytope $P_0\subset M_{\mathbb{R}}$ with normal fan $\Sigma$. 
Choose a rational point in
the interior of $P_0$, translate it to the origin, and take a
sufficiently divisible positive integral dilation. Then we may assume that $P_0$ is a lattice polytope and $0$ is contained in the interior of $P_0$. 
We can take $P$ to be the Minkowski sum of $gP_0$ for all $g\in G$. Since the normal fan of $gP_0$ is
$g\Sigma=\Sigma$, the normal fan of $P$ is again $\Sigma$ by~\cite[Proposition~7.12]{Ziegler}.
\end{proof}

For any $u\in N$, we define a function
$$
\phi(u)=\max\{\langle x,u\rangle:x\in P\}\in \mathbb{Z}_{\geq 0}. $$
The function $\phi$ is $G$-invariant and linear on every cone of $\Sigma$.
Moreover, it is not hard to see from the definition of normal fan that 
\begin{equation*} 
    \phi(u+v)\leq \phi(u)+\phi(v)
\end{equation*}
with equality if and only if $u$ and $v$ lie in a common cone of $\Sigma$. 

Let us introduce the \emph{deformed group ring}
$$R_\Sigma=\bigoplus_{u\in N}\mathbb{C}[t]\cdot x^u,\qquad 
x^u\cdot x^v = t^{\phi(u)+\phi(v)-\phi(u+v)}x^{u+v}. $$
It is not hard to check that $R_\Sigma$ is a commutative, associative algebra with unit $1:=x^0$, free as a $\mathbb{C}[t]$-module.
Using completeness and smoothness of $\Sigma$, it is not hard to see that $R_{\Sigma}$ is finitely generated by elements $x^{v_\rho}$ for $\rho\in \Sigma(1)$. 
% For any $u\in N$, choose a cone $\sigma\in\Sigma$ containing
% $u$ by completeness of $\Sigma$.  Smoothness of $\Sigma$ gives
% \[
%   u=\sum_{\rho\in\sigma(1)}a_\rho v_\rho,
%   \qquad a_\rho\in\mathbb{Z}_{\geq0}.
% \]
% All partial sums lie in $\sigma$, so the equality case of
% \eqref{eq:subadditivity} gives
% \[
%   x^u=\prod_{\rho\in\sigma(1)}(x^{v_\rho})^{a_\rho}.
% \]
% Thus finitely many elements $x^{v_\rho}$
%  for $\rho \in \Sigma(1)$ generate $R_\Sigma$ as a $\mathbb{C}[t]$-algebra.
By the $G$-invariance of $\phi$, the group $G$ acts on $R_\Sigma$ by $\mathbb{C}[t]$-algebra automorphisms defined by $gx^{u}:=x^{gu}$. 

For $m\in M$, let
$$  \theta_m
  =\sum_{\rho\in\Sigma(1)}
   \langle m,v_\rho\rangle x^{v_\rho}\in R_\Sigma.
$$
Let us introduce the ideal
$$  J_\Sigma=\langle\theta_m:m\in M\rangle\trianglelefteq R_\Sigma.
$$
For any basis $m_1,\ldots,m_d$ of $M$, the ideal $J_\Sigma$ is generated by
$\theta_{m_1},\ldots,\theta_{m_d}$.
We fix a basis $m_1,\ldots,m_d$ once and retain it throughout.
Since $g\theta_m=\theta_{gm}$ for any $g\in G,m\in M$, the ideal $J_\Sigma$ is $G$-stable.
Let us consider 
$$A_\Sigma := R_\Sigma/J_\Sigma.$$

\begin{lemma}\label{lem:iso0}
We have $G$-equivariant isomorphisms
$$
R_\Sigma/tR_\Sigma \simeq H^*_T(X_\Sigma; \mathbb{C}),\qquad 
A_\Sigma/tA_\Sigma \simeq H^*(X_\Sigma; \mathbb{C}).   
$$
\end{lemma}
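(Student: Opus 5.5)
Setting $t=0$ in the multiplication rule turns $R_\Sigma/tR_\Sigma$ into the algebra with $\mathbb{C}$-basis $\{x^u : u\in N\}$ and product
\[
x^u\cdot x^v=\begin{cases} x^{u+v}, & u,v \text{ lie in a common cone of }\Sigma,\\ 0, & \text{otherwise,}\end{cases}
\]
by the equality criterion for the subadditivity of $\phi$. This is the \emph{deformed group ring} (Stanley's face ring / ring of conewise polynomial functions) of the complete fan $\Sigma$. The plan is to identify it with the Stanley--Reisner ring $\mathbb{C}[\Sigma]=\mathbb{C}[y_\rho:\rho\in\Sigma(1)]/I_\Sigma$, where $I_\Sigma$ is generated by the square-free monomials $\prod_{\rho\in S}y_\rho$ with $S$ not spanning a cone. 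Then we invoke the standard facts $H^*_T(X_\Sigma;\mathbb{C})\cong\mathbb{C}[\Sigma]$ (with $y_\rho\mapsto$ the equivariant class of the divisor $D_\rho$) and $H^*(X_\Sigma;\mathbb{C})\cong \mathbb{C}[\Sigma]/\langle \sum_\rho\langle m,v_\rho\rangle y_\rho : m\in M\rangle$ for smooth complete $X_\Sigma$ (Danilov--Jurkiewicz; see \cite{CLS}, \cite{Fulton}). We will check $G$-equivariance at each stage.

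The map $\Psi:\mathbb{C}[y_\rho]\to R_\Sigma/tR_\Sigma$ sends $y_\rho\mapsto x^{v_\rho}$. It is surjective by the argument sketched for generation: each $u\in N$ lies in some cone $\sigma$, and by smoothness $u=\sum_{\rho\in\sigma(1)}a_\rho v_\rho$ with $a_\rho\in\mathbb{Z}_{\ge0}$. All partial sums stay in $\sigma$, so $x^u=\prod (x^{v_\rho})^{a_\rho}$ even modulo $t$. A monomial $\prod_\rho y_\rho^{a_\rho}$ maps to $x^{\sum a_\rho v_\rho}$ when the support $\{\rho: a_\rho>0\}$ spans a cone. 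Otherwise it maps to $0$, since two of its factors, or a partial product and a factor, fail to share a cone. More precisely, build up the product one ray at a time: once the accumulated vector and the next ray are not in a common cone we get $0$, and in a simplicial fan the accumulated vector lies in the relative interior of the cone spanned by the support so far. Hence $I_\Sigma\subseteq\ker\Psi$. Conversely, the monomials of $\mathbb{C}[\Sigma]$, namely those supported on cones, form a basis. Under $\Psi$ they map bijectively onto $\{x^u\}$: the cone containing $u$ in its relative interior is unique, and so are the coefficients, by linear independence of the $v_\rho$. Therefore $\Psi$ induces $\mathbb{C}[\Sigma]\cong R_\Sigma/tR_\Sigma$. $G$ permutes the rays and satisfies $g v_\rho=v_{g\rho}$, so $\Psi$ is equivariant for the permutation action $g y_\rho=y_{g\rho}$. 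This is also the natural action on $H^*_T$, since $g$ maps $D_\rho$ to $D_{g\rho}$. One subtlety is that $G$ acts on $T$ by automorphisms, so $H_T^*$ is equivariant with respect to the twisted action, and $H^*_T(\mathrm{pt})=\operatorname{Sym}M_\mathbb{C}$ must be matched with its $G$-action. The identification $m\mapsto\sum_\rho\langle m,v_\rho\rangle [D_\rho]$ is visibly equivariant, which settles this.

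For $A_\Sigma$, right exactness of $-\otimes_{\mathbb{C}[t]}\mathbb{C}[t]/t$ gives $A_\Sigma/tA_\Sigma\cong (R_\Sigma/tR_\Sigma)/\langle\bar\theta_m\rangle$. Under $\Psi$ the element $\bar\theta_m$ corresponds to the linear form $\sum_\rho\langle m,v_\rho\rangle y_\rho$, which is exactly the image of $m\in H^2_T(\mathrm{pt})$. The quotient is therefore $\mathbb{C}[\Sigma]\otimes_{\operatorname{Sym}M_\mathbb{C}}\mathbb{C}$. Because $X_\Sigma$ is smooth projective, hence equivariantly formal (its odd cohomology vanishes), this equals $H^*(X_\Sigma;\mathbb{C})$, compatibly with $G$ because $g\theta_m=\theta_{gm}$.

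The main obstacle is the careful vanishing/basis check that $\ker\Psi=I_\Sigma$ in the simplicial case, together with the bookkeeping of $G$-equivariance of the classical isomorphisms. The latter I would handle by noting that all of these maps are defined by formulas natural in $(N,\Sigma)$.
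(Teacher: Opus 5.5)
Your proposal is correct and follows essentially the same route as the paper. Both arguments reduce modulo $t$ using the equality case of the subadditivity of $\phi$, identify the resulting algebra with the Stanley--Reisner ring via $y_\rho\mapsto x^{v_\rho}$, and then invoke the standard presentations of $H^*_T(X_\Sigma)$ and of $H^*(X_\Sigma)$ as the quotient by the linear system of parameters, which is the image of the $\theta_m$ modulo $t$. The only differences are that you carry out the kernel/basis verification and the $G$-equivariance bookkeeping explicitly, and you use equivariant formality for the second isomorphism. The paper instead cites Borisov--Chen--Smith, Ichim--R\"omer and Cox--Little--Schenck for these steps.
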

\begin{proof}
Note that $R_{\Sigma}/tR_{\Sigma}$ is isomorphic to the algebra $\mathbb{C}[\Sigma]:=\bigoplus_{u\in N}\mathbb{C}\cdot x^u$ with multiplication
$$
x^u\cdot x^v = \begin{cases}
x^{u+v}, & \text{$u,v$ lie in a common cone of $\Sigma$},\\
0, & \text{otherwise}. 
\end{cases}$$
Since $\Sigma$ is assumed to be complete and smooth, the \emph{Stanley--Reisner ring} of $\Sigma$
$$
\frac{\mathbb{C}[y_\rho: \rho\in \Sigma(1)]}
{\left<y_{\rho_1}\cdots y_{\rho_k}: 
\operatorname{cone}(\rho_1,\ldots,\rho_k)\notin \Sigma\right>}
$$
is isomorphic to $\mathbb{C}[\Sigma]$ by 
$y_{\rho}\mapsto x^{v_\rho}$; compare~\cite[Section~5]{BorisovChenSmith} and~\cite{IchimRomer}. 
In particular, the above presentation agrees with the presentation of $H^*_T(X_\Sigma;\mathbb{C})$, see~\cite[Theorem 12.4.14]{CLS}. Note that the image of $\theta_m$ modulo $t$ gives precisely the \emph{toric linear system of parameters} of $H^*_T(X_\Sigma;\mathbb{C})$, hence we have the second isomorphism, see~\cite[Theorem 12.4.4]{CLS}.
\end{proof}

\begin{remark}\label{rem:Rees}
The algebra $R_\Sigma$ is an elementary Rees-type form of the deformed group
ring appearing in toric mirror constructions; compare
\cite[Section 5]{BorisovChenSmith}.  Smoothness of $\Sigma$ is used to identify the special
fiber $R_\Sigma/tR_\Sigma$ with the ordinary Stanley--Reisner ring rather than a more general
toric face ring.
\end{remark}

Consider the ideal 
$$\mathfrak{m} = \langle t\rangle+\left< x^{v_\rho}:\rho\in \Sigma(1)\right>\trianglelefteq R_\Sigma.$$
By Lemma \ref{lem:iso0}, $\mathfrak{m}$ is the unique prime ideal containing $\langle t\rangle+J_\Sigma$.
In particular, it is a $G$-stable maximal ideal.
% To see this, by \eqref{eq:iso}, the quotient of $R_\Sigma$ by $(t)+J_\Sigma$ is the connected graded Artinian algebra $H^*(X_\Sigma;\mathbb{C})$, whose positive-degree ideal is its unique prime ideal.
We use the same symbol $\mathfrak{m}$ for the image of $\mathfrak{m}$ in $A_\Sigma$.

\begin{lemma}There exists a $G$-invariant element $f\in A_{\Sigma}\setminus\mathfrak{m}$ such that $(A_{\Sigma})_f$ is flat over $\mathbb{C}[t]$. 
\end{lemma}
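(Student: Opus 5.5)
The plan is to show that $t$ is a nonzerodivisor on the localization $(A_\Sigma)_{\mathfrak m}$ using Cohen--Macaulayness. Then we spread this out to a principal open neighbourhood $D(f)$ of $\mathfrak m$, and finally make $f$ invariant by taking a norm over $G$. Since $\mathbb{C}[t]$ is a PID, flatness of a module over it is the same as having no $t$-torsion, because a nonzero torsion element is killed by some $t-c$, and only $c=0$ matters near $\mathfrak m$. So the goal is to find a $G$-invariant $f\notin\mathfrak m$ such that multiplication by $t$ is injective on $(A_\Sigma)_f$.

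\textbf{Step 1: $R_\Sigma$ and its local ring at $\mathfrak m$.} $R_\Sigma$ is a finitely generated $\mathbb{C}[t]$-algebra, hence Noetherian, and it is free over $\mathbb{C}[t]$, so $t$ is a nonzerodivisor. Moreover $R_\Sigma[t^{-1}]\cong\mathbb{C}[t,t^{-1}][N]$ via $x^u\mapsto t^{-\phi(u)}x^u$, so $R_\Sigma$ is a domain. By Lemma~\ref{lem:iso0}, $R_\Sigma/tR_\Sigma$ is the Stanley--Reisner ring of the boundary of the simplicial polytope dual to $P$. This is a simplicial sphere, so by Reisner's criterion the ring is Cohen--Macaulay of Krull dimension $d$. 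Since $t$ is a nonzerodivisor in the local ring $(R_\Sigma)_{\mathfrak m}$ and the quotient $(R_\Sigma/tR_\Sigma)_{\mathfrak m}$ is Cohen--Macaulay of dimension $d$, the ring $(R_\Sigma)_{\mathfrak m}$ is Cohen--Macaulay of dimension $d+1$.

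\textbf{Step 2: a regular sequence.} The images of $\theta_{m_1},\dots,\theta_{m_d}$ in $R_\Sigma/tR_\Sigma$ form a linear system of parameters, since the quotient $H^*(X_\Sigma)$ is finite-dimensional. Hence $t,\theta_{m_1},\dots,\theta_{m_d}$ is a system of parameters of the $(d+1)$-dimensional Cohen--Macaulay local ring $(R_\Sigma)_{\mathfrak m}$, and therefore a regular sequence. In a Noetherian local ring, any permutation of a regular sequence contained in the maximal ideal is again regular. So $\theta_{m_1},\dots,\theta_{m_d},t$ is regular, which means exactly that $t$ is a nonzerodivisor on $(A_\Sigma)_{\mathfrak m}=(R_\Sigma)_{\mathfrak m}/(\theta_{m_1},\dots,\theta_{m_d})$.

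\textbf{Step 3: spreading out and averaging.} Let $I=\{a\in A_\Sigma: t^ka=0 \text{ for some } k\}$ be the $t$-power torsion ideal. It is finitely generated because $A_\Sigma$ is Noetherian, and $I_{\mathfrak m}=0$ by Step 2. Hence $\operatorname{Ann}(I)\not\subseteq\mathfrak m$, so we can pick $h\in\operatorname{Ann}(I)\setminus\mathfrak m$. The ideal $I$ is $G$-stable because $G$ acts $\mathbb{C}[t]$-linearly, so $\operatorname{Ann}(I)$ is $G$-stable as well. Set $f=\prod_{g\in G}gh$. This $f$ is $G$-invariant, lies in $\operatorname{Ann}(I)$, and lies outside $\mathfrak m$ because $\mathfrak m$ is a $G$-stable prime and each $gh\notin g\mathfrak m=\mathfrak m$. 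Now localization commutes with taking $t$-power torsion, so the $t$-torsion of $(A_\Sigma)_f$ is $I_f=0$. This gives flatness of $(A_\Sigma)_f$ over $\mathbb{C}[t]$.

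\textbf{Main obstacle.} The delicate point is Step 1: establishing that $(R_\Sigma)_{\mathfrak m}$ is Cohen--Macaulay of the right dimension. This rests on Reisner's theorem for the Stanley--Reisner ring of a simplicial sphere, together with lifting the Cohen--Macaulay property along the regular element $t$. The dimension count $\dim (R_\Sigma)_{\mathfrak m}=d+1$ is what makes $t,\theta_{m_1},\dots,\theta_{m_d}$ a full system of parameters, and it also needs to be checked.
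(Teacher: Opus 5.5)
\textbf{There is a genuine gap, in the reduction in your opening paragraph on which Step 3 rests.} Your Steps 1 and 2 are correct and are essentially the paper's argument. The paper gets the regular sequence $t,\theta_{m_1},\dots,\theta_{m_d}$ from freeness of $H_T^*(X_\Sigma)$ over $H_T^*(\mathrm{pt})$, which is Cohen--Macaulayness of the Stanley--Reisner ring in other words, and then permutes it exactly as you do.

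Over $\mathbb{C}[t]$, flatness of $(A_\Sigma)_f$ requires every $t-c$, $c\in\mathbb{C}$, to act injectively, not just $t$. ``Only $c=0$ matters near $\mathfrak m$'' is true in the local ring $(A_\Sigma)_{\mathfrak m}$, where $t-c$ is a unit for $c\neq 0$. But $D(f)$ is a Zariski open set, and in general it contains points over $t=c\neq 0$.

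In fact your ideal $I$ is always zero. If $t^k a=0$, then $\operatorname{Supp}(A_\Sigma a)\subseteq V(t)=\operatorname{Spec}(A_\Sigma/tA_\Sigma)=\{\mathfrak m\}$, and $a/1=0$ in $(A_\Sigma)_{\mathfrak m}$ by Step 2, so $a=0$. Thus your Step 3 permits $h=f=1$ and would ``prove'' that $A_\Sigma$ itself is flat over $\mathbb{C}[t]$. That is false in general. In the $B_2$ example of the paper's remark, $A_\Sigma/(t-t_0)A_\Sigma$ is infinite-dimensional for special $t_0\neq 0$, while generic fibers are finite. A flat finitely generated $\mathbb{C}[t]$-algebra with finite generic fiber has all closed fibers finite: every minimal prime then lies over the generic point and has one-dimensional quotient. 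So there $A_\Sigma$ has $(t-t_0)$-torsion, which no choice of $h\in\operatorname{Ann}(I)=A_\Sigma$ removes.

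\textbf{The repair is easy.} Take $I$ to be the full $\mathbb{C}[t]$-torsion ideal $\{a\in A_\Sigma: p(t)a=0 \text{ for some } 0\neq p\in\mathbb{C}[t]\}$. It is $G$-stable and finitely generated. Also $I_{\mathfrak m}=0$, because in $(A_\Sigma)_{\mathfrak m}$ each $t-c$ with $c\neq 0$ is a unit and $t$ is a nonzerodivisor. Your annihilator-and-norm argument then goes through unchanged, since localization commutes with taking $\mathbb{C}[t]$-torsion.

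With this fix, your argument is an elementary, self-contained substitute for the openness of the flat locus, which the paper quotes from the Stacks Project before replacing $f$ by $\prod_{g}gf$. It also makes the $G$-invariance come for free, because $I$ is visibly $G$-stable.

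A minor slip: the isomorphism $R_\Sigma[t^{-1}]\cong\mathbb{C}[t,t^{-1}][N]$ is $x^u\mapsto t^{\phi(u)}e^u$, not $t^{-\phi(u)}$. It is not used anyway.
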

\begin{proof}
The ring $R_\Sigma$ is free as a $\mathbb{C}[t]$-module, so $t$ is a non-zero-divisor in $R_\Sigma$.
By equivariant formality, $R_\Sigma/tR_\Sigma\simeq H_T^*(X_{\Sigma})$ is free over $\mathbb{C}[\theta_{m_1},\ldots,\theta_{m_d}]\simeq H_T^*(\mathsf{pt})$,
hence the generators 
$t,\theta_{m_1},\ldots,\theta_{m_d}$
of $\langle t\rangle + J_\Sigma$ form a regular sequence in $R_\Sigma$, thus in $(R_{\Sigma})_{\mathfrak{m}}$. 

In a Noetherian local ring, any permutation of regular sequence is still a regular sequence (see~\cite[{\href{https://stacks.math.columbia.edu/tag/00LJ}{Lemma 10.68.4}}]{Stacks}). Moving $t$ to the last position, we see that $t$ is a non-zero-divisor on 
$$(R_{\Sigma})_{\mathfrak{m}}/
\langle \theta_{m_1},\ldots,\theta_{m_d}\rangle 
=(A_{\Sigma})_{\mathfrak{m}}.$$
That is $(A_\Sigma)_{\mathfrak{m}}$ is torsion-free, thus flat over the discrete valuation ring $\mathbb{C}[t]_{\langle t\rangle}$. 
%\cite[{\href{https://stacks.math.columbia.edu/tag/0AUW}{0AUW}}]{Stacks}. 
% Hence $A_\Sigma$ is flat over
% $\mathbb{C}[t]$ at $\mathfrak m$.
Since the flat locus is open~\cite[{\href{https://stacks.math.columbia.edu/tag/0399}{Theorem 37.15.1}}]{Stacks}, we can find $f\notin \mathfrak{m}$ such that $(A_\Sigma)_f$ is a flat $\mathbb{C}[t]$-module. Note that $\mathfrak{m}$ is a $G$-stable maximal ideal. Replacing $f$ by $\prod_{g\in G} gf$, we can further choose $f\notin \mathfrak{m}$ to be $G$-invariant. 
\end{proof}

Now, let us switch to the language of algebraic geometry. 
Let $\mathfrak{X}=\operatorname{Spec}(A_\Sigma)_f$. 
The $\mathbb{C}[t]$-algebra structure of $A_\Sigma$ defines a morphism 
$$\kappa: \mathfrak{X}\longrightarrow \mathbb{A}^1. $$
The special fiber $\kappa^{-1}(0)$ is supported at the single point $\mathfrak{m}$. Indeed, as $f\notin \mathfrak{m}$
the image of $f$ in the local Artinian algebra $A_\Sigma/tA_\Sigma$ is a
unit, so localization does not change the special fiber
\begin{equation}\label{eq:central-fiber}
  \mathbb{C}[\mathfrak X_0]
  =(A_\Sigma)_f/t(A_\Sigma)_f=A_\Sigma/tA_\Sigma.
\end{equation}

By~\cite[{\href{https://stacks.math.columbia.edu/tag/02LP}{Lemma 37.41.6}}]{Stacks}, we can find an elementary \'etale neighbourhood $\varphi:(U,\mathfrak{o})\to (\mathbb{A}^1,0)$ and a decomposition into open-and-closed subschemes
$$\mathfrak{X}_U:= \mathfrak{X}\times_{\mathbb{A}^1}U 
=W \amalg V$$
such that 
$V\to \mathfrak{X}_U\to U$ is 
finite (denoted by $\pi$), and 
 $W\to \mathfrak{X}_U\to U$ has empty fiber at $\mathfrak{o}$. 
After replacing $U$ by the connected component containing $\mathfrak o$, we may
assume that $U$ is connected. 
The situation is summarized by 
$$\xymatrix{
V\ar@{^(->}[r]\ar[dr]_{\pi}&\mathfrak{X}_U\ar[r]\ar[d] & \mathfrak{X}\ar[d]^\kappa\\
& U\ar[r]^{\varphi}& \mathbb{A}^1
& 
}$$
Since an \'etale morphism is flat, and our construction of $f$ implies that $\kappa$ is flat, all morphisms in the diagram are flat. 
As a result, the sheaf $\pi_*\mathcal{O}_V$ is a vector bundle over $U$.  

As $V$ is finite and flat over $U$ and $U$ is connected, the decomposition $\mathfrak{X}_U=W\amalg V$ is unique. 
% If there is another choice of $V$, then the difference (if nonempty) is flat and free over $U$ but has empty fiber at $\mathfrak{o}$. This is impossible as $U$ is connected. 
In particular, $V$ is $G$-stable. As a result, $\pi_*\mathcal{O}_V$ is a $G$-equivariant vector bundle over $U$, where $G$ acts trivially on the base $U$.

For any closed point $u\in U$, let $V_{u}=\pi^{-1}(u)\subset \kappa^{-1}(\varphi(u))=\mathfrak{X}_{\varphi(u)}$ be the scheme-theoretic fiber. 
The fiber of $\pi_*\mathcal{O}_V$ at $u$ is the coordinate ring
$\mathbb{C}[V_u]$. 
Since $\pi_*\mathcal{O}_V$ is a $G$-equivariant vector bundle, by Lemma \ref{lem:deformationlem}, we have 
\begin{equation}\label{eq:iso1}
\mathbb{C}[V_{u}]\simeq \mathbb{C}[V_{\mathfrak{o}}]=\mathbb{C}[\mathfrak{X}_{0}]
\end{equation}
as $G$-representations.

Choose a closed point $u_0\in U$ with $\varphi(u_0)=t_0\neq 0$. 
Such a point exists because $\varphi$ is \'etale at $\mathfrak{o}$. 
Put 
$$\overline{R}_{\Sigma} := R_\Sigma/(t-t_0)R_\Sigma.$$
Define the ambient fiber
$$X_{t_0}=\operatorname{Spec}\bigl(A_\Sigma/(t-t_0)A_\Sigma\bigr)
=\operatorname{Spec}(
\overline{R}_\Sigma/\langle\theta_{m_1},\ldots,\theta_{m_d}\rangle).
$$
The fiber $V_{u_0}$ is open and closed in $\mathfrak{X}_{t_0}=X_{t_0}\cap D(f)$. 
Hence, at each point of $V_{u_0}$, its local ring agrees with the
corresponding local ring of $X_{t_0}$.
Since $V_{u_0}$ is closed and open in $\mathfrak{X}_{t_0}$, we have 
$$
\mathbb{C}[V_{u_0}]=\prod_{\mathfrak{p}\in (V_{u_0})_{\rm red}}
\mathbb{C}[V_{u_0}]_{\mathfrak{p}}
\cong \bigoplus_{\mathfrak{p}\in (V_{u_0})_{\rm red}}
\mathbb{C}[\mathfrak{X}_{t_0}]_{\mathfrak{p}}
= \bigoplus_{\mathfrak{p}\in (V_{u_0})_{\rm red}}
\mathbb{C}[X_{t_0}]_{\mathfrak{p}},$$
the isomorphism holds because $(V_{u_0})_{\rm red}$ is a finite scheme.
Let us decompose $(V_{u_0})_{\rm red}
\subset (X_{t_0})_{\rm red}$ into $G$-orbits
$$(V_{u_0})_{\rm red}=G\mathfrak{p}_1\sqcup G\mathfrak{p}_2\sqcup \cdots\sqcup G\mathfrak{p}_m,$$
where $\mathfrak{p}_1,\ldots,\mathfrak p_m$ are the representatives for the
$G$-orbits in $(V_{u_0})_{\mathrm{red}}$.
Let $H_i\leq G$ be the stabilizer of $\mathfrak{p}_i$. 
Then as $G$-representations, we have
\begin{equation*}
\mathbb{C}[V_{u_0}]
\simeq \bigoplus_{i=1}^m
\operatorname{Ind}_{H_i}^G \big(\mathbb{C}[X_{t_0}]_{\mathfrak{p}_i}\big).
\end{equation*}

\begin{lemma}\label{lem:iso3}
The ring $\mathbb{C}[X_{t_0}]_{\mathfrak{p}_i}$ is a permutation representation of $H_i$. 
\end{lemma}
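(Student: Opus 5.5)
The plan is to realize $\mathbb{C}[X_{t_0}]_{\mathfrak{p}_i}$ as a local Jacobian algebra of an $H_i$-invariant function at an isolated critical point, and then invoke Theorem~\ref{thm:Jacobiperm}.

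First, identify $\overline{R}_\Sigma$ with a group ring. For $t_0\neq 0$ put $y^u := t_0^{\phi(u)}x^u$. Then
\[
y^u y^v = t_0^{\phi(u)+\phi(v)}\, t_0^{\phi(u)+\phi(v)-\phi(u+v)}\cdots,
\]
and the bookkeeping has to be checked; the correct normalization is $y^u := t_0^{-\phi(u)}x^u$, which gives $y^uy^v=y^{u+v}$. Hence $\overline{R}_\Sigma\cong\mathbb{C}[N]=\mathbb{C}[T^\vee]$ is the coordinate ring of the dual torus $T^\vee=\operatorname{Spec}\mathbb{C}[N]$. This identification is $G$-equivariant because $\phi$ is $G$-invariant.

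Under it,
\[
\theta_m=\sum_\rho \langle m,v_\rho\rangle\, t_0^{\phi(v_\rho)}\, y^{v_\rho} = \partial_m W,
\qquad
W:=\sum_{\rho\in\Sigma(1)} t_0^{\phi(v_\rho)}\, y^{v_\rho},
\]
where $\partial_m$ is the invariant vector field on $T^\vee$ given by $\partial_m y^u=\langle m,u\rangle y^u$. The potential $W$ is $G$-invariant, since $G$ permutes the rays and $\phi(gv_\rho)=\phi(v_\rho)$. The fields $\partial_{m_1},\ldots,\partial_{m_d}$ form a global frame of $T_{T^\vee}$, so the ideal $\langle\theta_{m_1},\ldots,\theta_{m_d}\rangle$ is exactly the intrinsic Jacobian ideal $\{\xi(W)\}$. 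It follows that $X_{t_0}=\operatorname{Crit}(W)$ and
\[
\mathbb{C}[X_{t_0}]_{\mathfrak{p}_i}=J_{\mathfrak{p}_i}(W)
\]
for the germ $(T^\vee,\mathfrak p_i)$. This germ carries an algebraic action of $H_i$, and $W$ is $H_i$-invariant.

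Next, check the hypotheses of Theorem~\ref{thm:Jacobiperm}. Finiteness of $\dim J_{\mathfrak p_i}(W)$ follows because $\mathfrak p_i$ is a point of the finite scheme $V_{u_0}$ and the local rings agree. Also $dW(\mathfrak p_i)=0$, since $\mathfrak p_i\in X_{t_0}$. So $\mathfrak p_i$ is an isolated critical point.

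The main obstacle is the realness hypothesis: $T_{\mathfrak p_i}T^\vee$ must be the complexification of a real $H_i$-representation. Translation by $\mathfrak p_i$ identifies $T_{\mathfrak p_i}T^\vee$ with $\operatorname{Lie}(T^\vee)=M\otimes\mathbb{C}$ (or $N$-dual, depending on conventions). The point to verify is that $H_i$, which fixes $\mathfrak p_i$, acts on this tangent space via its linear action on the lattice. Concretely, $h$ acts on $T^\vee$ by a group automorphism, $h\cdot \exp(\xi)p=h(\exp \xi)\,h(p)$, so its differential at a fixed point $p$ is, after left translation, the differential at $1$, namely the lattice action. This is defined over $\mathbb{Z}$, hence is the complexification of the real representation $M\otimes\mathbb{R}$, even though $\mathfrak p_i$ need not be the identity.

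With the hypotheses in place, Theorem~\ref{thm:Jacobiperm} (algebraic case) applied to $(T^\vee,\mathfrak p_i)$, $H_i$ and $W$ shows that $J_{\mathfrak p_i}(W)=\mathbb{C}[X_{t_0}]_{\mathfrak p_i}$ is a permutation representation of $H_i$.
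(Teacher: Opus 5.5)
Your proposal is correct and follows the paper's proof essentially step by step. Your normalization $y^u=t_0^{-\phi(u)}x^u$ is exactly the paper's $G$-equivariant isomorphism $\psi$, which sends $x^u$ to $t_0^{\phi(u)}e^u$, and your $\partial_{m_i}W$ are the paper's $z_i\,\partial\mathcal{W}/\partial z_i=\psi(\theta_{m_i})$; as in the paper, finiteness comes from $V_{u_0}$, and your left-translation argument is the paper's $H_i$-equivariant translation by $\mathfrak p_i^{-1}$ identifying $T_{\mathfrak p_i}T^\vee$ with $M\otimes_{\mathbb Z}\mathbb C$, after which Theorem~\ref{thm:Jacobiperm} applies (the only blemish is cosmetic: delete the aborted first display with the wrong normalization).
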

\begin{proof}
Note that we have a $G$-equivariant isomorphism between $\overline{R}_\Sigma$ and the group ring
$$\psi:\overline{R}_\Sigma \stackrel{\sim}\longrightarrow
\mathbb{C}[N],\qquad 
x^u \longmapsto t_0^{\phi(u)}e^u.$$
Since 
$\operatorname{Spec}(\mathbb{C}[N])=\mathbb{C}^\times\otimes_{\mathbb{Z}}M=T^\vee$, we can identify the points $\mathfrak{p}_i\in T^\vee$. 
Let us consider the $G$-invariant Laurent polynomial (called the potential) 
\begin{equation}\label{eq:potential}
\mathcal{W} = \sum_{\rho\in \Sigma(1)} t_0^{\phi(v_\rho)}e^{v_\rho}\in \mathbb{C}[N]^G. 
\end{equation}
Let us pick a basis $m_1^*,\ldots,m_d^*$ of $N$ dual to $m_1,\ldots,m_d$ and put $z_i=e^{m_i^*}$. A direct calculation gives 
$$z_i 
\frac{\partial \mathcal{W}}{\partial z_i}
= \sum_{\rho\in \Sigma(1)}
\langle m_i,v_\rho\rangle t_0^{\phi(v_\rho)} e^{v_\rho}=\psi(\theta_{m_i}), \qquad i=1,\dots,d.$$
Since $z_i$ is invertible on $T^\vee$, we have 
$$\mathbb{C}[X_{t_0}]_{\mathfrak{p}_i}
\cong J_{\mathfrak{p}_i}(\mathcal{W}),$$
the local Jacobian algebra of $\mathcal{W}$.
Here the right-hand side is actually the analytic local Jacobian algebra from
\eqref{eq:local-jacobian}. 
Indeed, because \(V_{u_0}\) is finite and its local ring at \(\mathfrak p_i\) agrees with \(\mathcal O_{X_{t_0},\mathfrak p_i}\), the latter is a finite-dimensional \(\mathbb C\)-algebra. The algebraic-to-analytic local homomorphism induces an isomorphism on this finite-length quotient; see~\cite[Propositions~3, 28, and~29]{SerreGAGA}. 
Finiteness of the scheme $V_{u_0}$ implies that every such
$\mathfrak{p}_i$ is an isolated critical point of $\mathcal{W}$.

Translation by \(\mathfrak p_i^{-1}\) is \(H_i\)-equivariant because, for \(h\in H_i\),
\[
h(\mathfrak p_i^{-1}z)=(h\mathfrak p_i)^{-1}h(z)=\mathfrak p_i^{-1}h(z).
\]
Hence we can identify $H_i$-modules \(T_{\mathfrak p_i}T^\vee\cong T_1T^\vee\cong M\otimes_\mathbb Z\mathbb C\), the complexification of \(M_\mathbb R\).
In particular, it is the complexification of a real $H_i$-representation. 
By Theorem \ref{thm:Jacobiperm}, 
$J_{\mathfrak{p}_i}(\mathcal{W})$ is a permutation representation of $H_i$. 
\end{proof}

Combining Lemma \ref{lem:permisind} and Lemma \ref{lem:iso3}, we see that $\mathbb{C}[V_{u_0}]$ is a permutation representation of $G$. 
By Lemma \ref{lem:iso0}, \eqref{eq:central-fiber} and \eqref{eq:iso1}, 
we can conclude that $H^*(X_\Sigma; \mathbb{C})$ is a permutation representation of $G$. 

\begin{remark}
The support function $\phi$ determines a $G$-invariant ample divisor class
$D_{-\phi}$ on $X_\Sigma$. 
It therefore determines a $\mathbb{Z}$-linear degree map
\[
\deg_{\phi}:H_{2}(X_{\Sigma};\mathbb{Z})\longrightarrow \mathbb{Z},\qquad \deg_{\phi}(\beta) = D_{-\phi}\cdot \beta,
\]
which is strictly positive on \(\operatorname{Eff}(X_{\Sigma})\setminus \{0\}\).
%where $\operatorname{Eff}(X_\Sigma)\subset H_2(X_\Sigma;\mathbb{Z})$ is the semigroup generated by effective curves.

If \(X_{\Sigma}\) is Fano, Givental's mirror theorem~\cite{Givental} identifies this ring \(A_{\Sigma}\) with the specialized small quantum cohomology of \(X_{\Sigma}\) under \(q^{\beta}\to t^{\deg_{\phi}(\beta)}\). The Laurent polynomial \eqref{eq:potential} is then the associated Landau--Ginzburg potential, and the quotient by its logarithmic derivatives is its Jacobian algebra; see also~\cite[Proposition 3.3]{OstroverTyomkin}.

In the Fano case, quantum cohomology is finite free over the parameter ring,
so the whole one-parameter family can be used.  For a general smooth
projective toric variety, the algebra $A_\Sigma$ need not furnish a globally finite flat family with this direct quantum-cohomological interpretation.
For example, for the Hirzebruch surfaces 
$$X_\Sigma = \mathbb{F}_a=\mathbb{P}(\mathcal{O}_{\mathbb{P}^1}\oplus \mathcal{O}_{\mathbb{P}^1}(a)), \quad a>0$$
with the standard fan convention, the special fiber $A_\Sigma/tA_\Sigma \simeq H^*(X_\Sigma)$ is $4$-dimensional, and an easy computation shows that $A_\Sigma/(t-t_0)A_{\Sigma}$ for generic $t_0\neq 0$ is of  dimension $\max(4,a+2)$. 

Another example is 
$$X_\Sigma = \text{blowup of  $\mathbb{P}^1\times \mathbb{P}^1$ at $\{0,\infty\}\times \{0,\infty\}$}.$$
The fan $\Sigma$ is the Coxeter fan of type $B_2$ and we consider the full Weyl group $B_2$ action on it. 
For some specific values $t_0\neq 0$ (depending on the choice of $\phi$), $A_\Sigma/(t-t_0)A_\Sigma$ is infinite dimensional, and for other $t_0$'s, $A_\Sigma/(t-t_0)A_\Sigma$ is $8$-dimensional. 

The \'etale construction above isolates the finite flat branch that
specializes to ordinary cohomology and is sufficient for the representation-theoretic argument, avoiding any global quantum-cohomological identification.

Figure~\ref{fig:finite-branch} illustrates the local geometry near $t=0$:
the component $V$ is the finite flat branch containing the special fiber,
whereas $W$ contains the remaining branches after \'etale base
change.
\begin{figure}[ht]
\centering
\begin{tikzpicture}[scale=1]
  \node at (2.5,1) {$\mathfrak X_U$};
  \node at (-2.5,0) {$V$};
  \node at (-2.5,1.5) {$W$};
  \draw[ultra thick] (-2,1.25) to[out=0,in=-100] (-0.2,2.75);
  \draw[ultra thick] (2,1.25) to[out=180,in=-80] (0.2,2.75);
  \draw[ultra thick] (-2,1.75) to[out=0,in=-110] (-0.4,2.75);
  \draw[ultra thick] (2,1.75) to[out=180,in=-70] (0.4,2.75);
  \draw[ultra thick] (-2,0.75) to[out=0,in=180] (2,-0.75);
  \draw[ultra thick] (-2,0.25) to[out=0,in=180] (2,-0.25);
  \draw[ultra thick] (-2,-.25) to[out=0,in=180] (2,0.25);
  \draw[ultra thick] (-2,-0.75) to[out=0,in=180] (2,0.75);
  \draw[ultra thick,dashed] (0,-1) node[below] {$t=0$} to (0,3);
  \draw[ultra thick,dashed] (1.5,-1) node[below] {$t\ne0$} to (1.5,3);
\end{tikzpicture}
\caption{The finite flat branch}
\label{fig:finite-branch}
\end{figure}
\end{remark}

\section{Euler characteristic interpretation of the characters} \label{sec:4}

In this section, we are interested in the geometric interpretation of the character of $g$ on $H^{*}(X_\Sigma)$
$$
\operatorname{Tr}\big(g\mid H^{*}(X_\Sigma)\big)
:=\sum_{i\geq 0}\operatorname{Tr}(g\mid H^{i}(X_\Sigma)).$$

\begin{prop}
Let $S$ be any finite $G$-set such that \(H^*(X_\Sigma;\mathbb{C})\cong\operatorname{Map}(S,\mathbb{C})\)
as \(G\)-representations, whose existence follows from Theorem \ref{thm:main}. Then we have 
$$
\operatorname{Tr}\big(g\mid H^{*}(X_\Sigma)\big)
=|S^g|.$$
\end{prop}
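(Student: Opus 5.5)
The proposition follows directly from the isomorphism \(H^*(X_\Sigma;\mathbb{C})\cong\operatorname{Map}(S,\mathbb{C})\) together with the elementary computation of the character of a permutation representation, already noted in Section~\ref{sec:2}.

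First, fix a $G$-equivariant isomorphism \(\Psi:H^*(X_\Sigma;\mathbb{C})\to\operatorname{Map}(S,\mathbb{C})\) of ungraded representations. The trace of $g$ on the ungraded space \(H^*(X_\Sigma)=\bigoplus_i H^i(X_\Sigma)\) equals the sum over $i$ of the traces on the graded pieces, because $g$ preserves each $H^i$. So \(\operatorname{Tr}(g\mid H^*(X_\Sigma))\), as defined above, is the trace of $g$ on the whole space. Traces are invariant under conjugation by the intertwiner $\Psi$, which gives
\[
\operatorname{Tr}\big(g\mid H^*(X_\Sigma)\big)=\operatorname{Tr}\big(g\mid \operatorname{Map}(S,\mathbb{C})\big).
\]

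Second, compute the right-hand side using the basis of indicator functions \(\delta_s\), \(s\in S\). The action \((g\cdot F)(s)=F(g^{-1}s)\) gives \(g\cdot\delta_s=\delta_{gs}\). The matrix of $g$ in this basis is therefore a permutation matrix, and its diagonal entries equal $1$ exactly at those $s$ with $gs=s$. Hence the trace is \(|S^g|\), which proves the claim.

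There is no substantial obstacle. The only point to handle carefully is the convention for the trace: the statement uses the plain sum \(\sum_i\operatorname{Tr}(g\mid H^i)\), not an alternating sum. For smooth projective toric varieties the odd cohomology vanishes anyway, so the plain and alternating sums agree. This is what later allows comparison with the Lefschetz number \(\chi(X_\Sigma^g)\) in the rest of Section~\ref{sec:4}.
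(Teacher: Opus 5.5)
Your proof is correct and matches the paper's approach. The paper gives no separate proof here and relies on the observation in Section~\ref{sec:2} that the character of $\operatorname{Map}(S,\mathbb{C})$ at $g$ is $|S^g|$, which together with invariance of trace under the intertwining isomorphism is exactly your argument. Your remark on vanishing odd cohomology is not needed for this statement; it only matters later, in Proposition~\ref{prop:char=Eulercharoffixedloci}.
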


\begin{prop}\label{prop:char=Eulercharoffixedloci}
For any $g\in G$, we have 
$$
\operatorname{Tr}\big(g\mid H^{*}(X_\Sigma)\big)
=\chi(X_\Sigma^g)$$
where $\chi(X_\Sigma^g)$  is the Euler characteristic of the fixed locus $X_\Sigma^g$. 
\end{prop}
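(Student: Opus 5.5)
This follows from the Lefschetz fixed point theorem for finite-order automorphisms of compact manifolds. The argument has three steps.

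\textbf{Step 1: odd cohomology vanishes.} Since $X_\Sigma$ is a smooth projective toric variety, $H^{odd}(X_\Sigma;\mathbb{C})=0$. For instance, this is visible from the presentation in Lemma \ref{lem:iso0}, which is concentrated in even degrees. Hence all signs in the Lefschetz number are $+1$, and
\[
L(g)=\sum_i(-1)^i\operatorname{Tr}(g\mid H^i(X_\Sigma))=\operatorname{Tr}\big(g\mid H^*(X_\Sigma)\big).
\]

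\textbf{Step 2: $L(g)=\chi(X_\Sigma^g)$.} The element $g$ acts on the compact complex manifold $X_\Sigma$ by an algebraic automorphism of finite order. Choose a $g$-invariant Hermitian metric by averaging over $\langle g\rangle$. Then $X_\Sigma^g$ is a disjoint union of closed complex submanifolds; locally $g$ linearizes, and $X_\Sigma^g$ is the $1$-eigenspace. The Lefschetz fixed point theorem for isometries then gives $L(g)=\chi(X_\Sigma^g)$.

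This step is the main thing to justify. Concretely, one perturbs $g$ along its normal directions, using that $dg-\mathrm{id}$ is invertible on the normal bundle of each fixed component. Alternatively, one applies the Lefschetz formula to the cyclic group $\langle g\rangle$ acting on a $\langle g\rangle$-CW structure, or uses the additivity of the equivariant Euler characteristic: $\sum_i(-1)^i\operatorname{Tr}(g\mid H^i_c(Y))$ is additive over $g$-stable locally closed strata and equals $0$ on strata with no $g$-fixed points, because there the cyclic group acts freely on cells.

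\textbf{Step 3 (optional combinatorial check).} For a more toric proof, stratify $X_\Sigma=\bigsqcup_{\sigma\in\Sigma}O(\sigma)$ into torus orbits. The element $g$ permutes the orbits, so only $g$-stable cones contribute to the additive Lefschetz number. For each $g$-stable $\sigma$, the orbit $O(\sigma)$ is a torus $T_\sigma$ on which $g$ acts by a group automorphism, and one must check that
\[
\sum_i(-1)^i\operatorname{Tr}\big(g\mid H^i_c(T_\sigma)\big)=\chi\big(T_\sigma^g\big).
\]
Both sides equal $\det(1-g\mid N(\sigma))$ up to the sign convention, using the exterior-algebra description of $H^*_c$ of a torus and the fact that the fixed locus of a finite-order automorphism of a torus is a finite union of subtori. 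A subtorus of positive dimension has Euler characteristic $0$, while a finite fixed group has order $|\det(1-g)|$. Summing over $g$-stable cones recovers $\chi(X_\Sigma^g)$ by additivity of $\chi$. This route also sets up the comparison with Gui's formula.

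I expect the main obstacle to be the sign and finite-fixed-point bookkeeping on each torus stratum in Step 3. Steps 1 and 2 together already suffice for the proposition.
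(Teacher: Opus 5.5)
Your argument is correct. Step~1 is the paper's opening line. The ``$\langle g\rangle$-CW'' variant you list in Step~2 is precisely the paper's proof: it takes a finite $\langle g\rangle$-equivariant triangulation (Illman) and passes to the barycentric subdivision, so that every $g$-stable simplex is fixed pointwise. It then applies the Hopf trace theorem to simplicial cochains, so the alternating trace becomes $\sum_i(-1)^i f_i(K^g)=\chi(X_\Sigma^g)$, with no metric, linearization or perturbation.

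Your primary citation, the Lefschetz theorem for isometries, is a heavier differential-topological route, and your sketch of it is off in one respect. One perturbs $g$ by the flow of a vector field \emph{tangent} to the fixed components, not along normal directions. The resulting fixed-point indices reduce to Poincar\'e--Hopf indices only because $\det_{\mathbb{R}}(I-dg|_\nu)=|\det_{\mathbb{C}}(I-dg|_\nu)|^2>0$ on the normal bundle $\nu$; without this positivity you would only get a signed sum of Euler characteristics of components. Similarly, in your additivity variant what matters is that $g$ fixes no cell, not that $\langle g\rangle$ acts freely.

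Your Step~3 is genuinely different and is in fact a complete proof by itself, given Step~1.
\begin{itemize}
\item Filtering $X_\Sigma$ by the closed $g$-stable subsets $\bigcup_{\dim\sigma\geq k}O(\sigma)$ makes the compactly supported Lefschetz number additive.
\item Orbits of non-stable cones contribute $0$, because $g$ permutes their summands with no fixed summand.
\item For $g\sigma=\sigma$, equivariant Poincar\'e duality gives $H^i_c(O(\sigma))\cong\Lambda^{2r-i}(L_\sigma\otimes\mathbb{C})$, where $L_\sigma=N/N_\sigma$ and $r=\operatorname{rank}L_\sigma$. The compactly supported Lefschetz number is therefore exactly $\det(I-\psi_\sigma(g))$.
\end{itemize}
There is no sign convention to fix: the only point is $\det(I-\psi_\sigma(g))\geq 0$, which is proved in Lemma~\ref{lem:orbit-fixed}.

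This route yields $\operatorname{Tr}(g\mid H^*(X_\Sigma))=\sum_{g\sigma=\sigma}\det(I-\psi_\sigma(g))$ directly, and hence the proposition via Proposition~\ref{prop:Euler from Det}. It uses only the orbit stratification and compactness, with no triangulation or manifold structure. It also recovers the $q=1$ case of Gui's formula independently. The paper's route, by contrast, is a general fact about finite-order self-maps of compact triangulable spaces and ignores the toric structure apart from odd vanishing.
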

\begin{proof}
Since the odd cohomology vanishes,
$$
\operatorname{Tr}\big(g\mid H^*(X_\Sigma)\big)
  =\sum_{i\geq 0}(-1)^i
   \operatorname{Tr}\big(g\mid H^i(X_\Sigma)\big).
$$
Choose a finite $\langle g\rangle$-equivariant triangulation $K$ of the
compact smooth manifold $X_\Sigma$ and replace it by its barycentric
subdivision; see~\cite{Illman}.  Every simplex stabilized setwise by $g$
is then fixed pointwise, because its vertices are barycenters of a strictly
increasing chain of faces of distinct dimensions.  The Hopf trace theorem
\cite[p.~122]{Munkres}, applied to the simplicial cochain complex, gives
\begin{align*}
\sum_{i\geq 0}(-1)^i
 \operatorname{Tr}\bigl(g\mid H^i(X_\Sigma)\bigr)
&=\sum_{i\geq 0}(-1)^i
 \operatorname{Tr}\bigl(g\mid C^i(K)\bigr)\notag\\
&=\sum_{i\geq 0}(-1)^i f_i(K^{g})
 =\chi(X_\Sigma^g),
\end{align*}
where $f_i(K^{g})$ is the number of $i$-simplices of the fixed subcomplex $K^{g}$.
\end{proof}

\begin{remark}
We remark that the proof here can be viewed as a version of Atiyah--Bott localization theorem. But we do not find a proper reference for non-isolated fixed points. 
% One can also see this as follows. 
% Let $G$ be a reductive group acting on a smooth projective variety $X$. 
% Then for any $g\in G$, 
% $$\sum_{i\geq 0}(-1)^i\operatorname{Tr}(g|H^{i}(X)) = \chi(X^g). $$
% This can be seen as follows. Note that $X^g=\Delta_X\cap \Gamma_g$, where 
% $\Delta_X$ (resp. $\Gamma_g$) is the diagonal (resp. graph of $g$) in $X\times X$. Pick a homogeneous basis $\{\alpha_j\}$ of $H^*(X)$, we have
% $$[\Delta_X]=\sum_j \alpha_j\otimes \alpha_j^*=\sum_j \alpha_j^*\otimes \alpha_j,\qquad [\Gamma_{g}] = \sum_j \alpha_j\otimes g\alpha_j^*.$$
% Then 
% $$\int_{X\times X} [\Delta_X]\cdot [\Gamma_g]
% = \sum_{j}(-1)^{|a_j|} \alpha_j\otimes g\alpha_j^*
% =\sum_{i\geq 0}(-1)^i\operatorname{Tr}(g\mid H^{i}(X)).$$
% On the other hand, since $G$ is reductive, the fixed loci $X^g$ is smooth, thus we can apply excess intersection formula to conclude 
% $$[\Delta_X]\cdot [\Gamma_g]
% = \sum_{Z\subseteq X^g} [Z]\cdot e(E_Z)
% $$
% where $Z$ runs over all connected component of $X^g$, and $e(E_Z)$ is the Euler class of the excess bundle. 
% In this case, it is just the tangent bundle of $Z$. 
% In all, we get 
% $$\int_{X\times X} [\Delta_X]\cdot [\Gamma_g]=\sum_{Z\subseteq X^g}
% \int_Z e(T_Z)=\sum_{Z\subseteq X^q}\chi(Z)=\chi(X^g). $$
\end{remark}

For $\sigma\in \Sigma$ with $g\sigma=\sigma$, let
\begin{equation*}
\psi_\sigma(g):N_\mathbb{R}/\operatorname{span}_\mathbb{R}(\sigma)
  \longrightarrow N_\mathbb{R}/\operatorname{span}_\mathbb{R}(\sigma)
\end{equation*}
be the induced quotient action. 
Let $O(\sigma)\subseteq X_\Sigma$ be the orbit corresponding to
$\sigma\in\Sigma$, and let $\chi_c$ denote the Euler characteristic with compact
support, see~\cite[Section 4.5]{Fulton}.

\begin{lemma}\label{lem:orbit-fixed}
For every $\sigma\in\Sigma$ such that $g\sigma=\sigma$,
\begin{equation*}
  \chi_c\bigl(O(\sigma)^g\bigr)
  =\det\bigl(I-\psi_\sigma(g)\bigr).
\end{equation*}
No properness assumption on the action is needed.
\end{lemma}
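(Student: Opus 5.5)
The plan is to identify the orbit $O(\sigma)$ with a torus and compute the fixed locus of $g$ on it directly. The orbit $O(\sigma)$ is the torus $T_{N(\sigma)}=N(\sigma)\otimes_\mathbb{Z}\mathbb{C}^\times$, where $N(\sigma)=N/N_\sigma$ and $N_\sigma=\operatorname{span}_\mathbb{R}(\sigma)\cap N$. Equivalently, it is $\operatorname{Hom}(\sigma^\perp\cap M,\mathbb{C}^\times)$, whose points are the semigroup homomorphisms $\sigma^\vee\cap M\to\mathbb{C}$ vanishing off $\sigma^\perp$. Since $g\sigma=\sigma$, $g$ preserves $N_\sigma$ and hence acts on the lattice $N(\sigma)$ by the automorphism $\bar g$ whose real extension is $\psi_\sigma(g)$. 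I will first check from the description of the $G$-action on $X_\Sigma$ (induced by the lattice action, with no twisting by a point of the torus) that the induced action on $O(\sigma)$ is the group automorphism $\bar g\otimes\mathrm{id}$ of the torus $T':=N(\sigma)\otimes\mathbb{C}^\times$. The distinguished point of $O(\sigma)$ is fixed, so $g$ acts on $O(\sigma)$ by a homomorphism, not an affine map. No properness is used here: $\bar g$ can be any finite-order automorphism of $N(\sigma)$.

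Next, the fixed locus $T'^{\bar g}$ is the kernel of the homomorphism $\alpha: T'\to T'$, $z\mapsto \bar g(z)z^{-1}$, which is induced by the lattice endomorphism $\bar g-1$ of $N(\sigma)\cong\mathbb{Z}^k$. I will bring $\bar g-1$ to Smith normal form $\operatorname{diag}(d_1,\dots,d_r,0,\dots,0)$ using bases of $N(\sigma)$. Then $\ker\alpha\cong\prod_{i\le r}\mu_{d_i}\times(\mathbb{C}^\times)^{k-r}$.

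Finally, $\chi_c$ of this group is computed as follows. If $k-r>0$, then $\ker\alpha$ is a finite disjoint union of tori of positive dimension. Each such torus has $\chi_c=0$, since $\chi_c(\mathbb{C}^\times)=0$ and $\chi_c$ is multiplicative. In this case $\bar g-1$ is singular, so $\det(I-\psi_\sigma(g))=0$ as well. If $r=k$, then $\ker\alpha$ is finite of order $\prod d_i=|\det(\bar g-1)|$. It remains to show that $\det(I-\psi_\sigma(g))\ge 0$, so that the absolute value can be dropped. For this I use that $\psi_\sigma(g)$ is a real matrix of finite order. Its eigenvalues are roots of unity: the eigenvalue $1$ does not occur since $\det\ne0$, each $-1$ contributes the factor $2$, and the non-real eigenvalues come in conjugate pairs $\lambda,\bar\lambda$ contributing $|1-\lambda|^2>0$. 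Hence the determinant is positive and equals $\prod d_i=\chi_c(O(\sigma)^g)$.

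The main obstacle is the first step: confirming rigorously that $g$ acts on $O(\sigma)$ as the automorphism induced by $\psi_\sigma(g)$ on the quotient lattice (for example via the orbit–cone correspondence in \cite[Section 3.2]{CLS}), rather than by some translate. The remaining steps are elementary lattice and eigenvalue computations.
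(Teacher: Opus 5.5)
Your proposal is correct and follows essentially the same route as the paper: identify $O(\sigma)$ with the torus of cocharacter lattice $N/N_\sigma$, on which $g$ acts through the induced lattice automorphism; compute the fixed locus as the kernel of the map induced by $\bar g-1$ via Smith normal form; and drop the absolute value using the eigenvalue argument. Your extra observation that the distinguished point of $O(\sigma)$ is $g$-fixed, so $g$ acts by a group automorphism rather than by a translate, correctly justifies the step the paper states without comment.
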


\begin{proof}
Put
\[
N_\sigma=N\cap\operatorname{span}_\mathbb{R}(\sigma),
  \qquad L_\sigma=N/N_\sigma.
\]
The subgroup $N_\sigma$ is saturated, and $O(\sigma)$ is the torus with
cocharacter lattice $L_\sigma$.  The element $g$ induces a lattice
automorphism $A$ of $L_\sigma$, whose realification is
$\psi_\sigma(g)$.  The fixed locus $O(\sigma)^g$ is the kernel of the torus
homomorphism induced by $A-I$.

Apply the Smith normal form to $I-A$.  If its rank is $s$ and its nonzero
invariant factors are $d_1,\ldots,d_s$, then, non-canonically,
\begin{equation*}
  O(\sigma)^g
  \simeq
  \mu_{d_1}\times\cdots\times\mu_{d_s}
  \times(\mathbb{C}^\times)^{\operatorname{rank}(L_\sigma)-s}.
\end{equation*}
If $I-A$ is singular, the positive-dimensional torus factor makes
$\chi_c$ vanish.  If $I-A$ is nonsingular, the fixed locus has
$|\det(I-A)|$ points.

Since $g$ has finite order, every real eigenvalue of $A$ is $1$ or $-1$,
and the non-real eigenvalues occur in conjugate pairs on the unit circle.  A
$1$-eigenvalue makes the determinant zero, a $-1$-eigenvalue contributes
$2$, and a pair $\lambda,\overline\lambda$ contributes
\[
  (1-\lambda)(1-\overline\lambda)=|1-\lambda|^2>0.
\]
Thus $\det(I-A)\geq0$, and the absolute value may be removed.
\end{proof}

\begin{prop} \label{prop:Euler from Det}
For any $g\in G$, we have 
$$\chi(X_\Sigma^g)=\sum_{{g\sigma=\sigma}}
   \det\bigl(I-\psi_\sigma(g)\bigr).$$
\end{prop}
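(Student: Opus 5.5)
The plan is to stratify $X_\Sigma$ by torus orbits, use additivity of the compactly supported Euler characteristic, and then apply Lemma~\ref{lem:orbit-fixed} orbit by orbit. Recall that $X_\Sigma=\bigsqcup_{\sigma\in\Sigma}O(\sigma)$ is a finite disjoint union of locally closed subvarieties. The action of $g$ on $X_\Sigma$ comes from a lattice automorphism preserving $\Sigma$, so it normalizes the torus action and satisfies $g\,O(\sigma)=O(g\sigma)$. Consequently the fixed locus decomposes as
\[
X_\Sigma^g=\bigsqcup_{\sigma\in\Sigma}\bigl(O(\sigma)\cap X_\Sigma^g\bigr)=\bigsqcup_{g\sigma=\sigma}O(\sigma)^g ,
\]
since a point of $O(\sigma)$ fixed by $g$ lies in $O(\sigma)\cap O(g\sigma)$, which is empty unless $g\sigma=\sigma$.

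Each piece $O(\sigma)^g$ is a locally closed complex algebraic subvariety of the compact variety $X_\Sigma^g$, so these pieces give a finite stratification into constructible sets. Since $X_\Sigma^g$ is compact, $\chi(X_\Sigma^g)=\chi_c(X_\Sigma^g)$. The compactly supported Euler characteristic of complex algebraic varieties is additive over such decompositions (see~\cite[Section 4.5]{Fulton}, where $\chi_c$ is used precisely this way). This gives
\[
\chi(X_\Sigma^g)=\sum_{g\sigma=\sigma}\chi_c\bigl(O(\sigma)^g\bigr).
\]
Lemma~\ref{lem:orbit-fixed} then replaces each term by $\det(I-\psi_\sigma(g))$, which yields the formula.

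The only step needing care is justifying additivity of $\chi_c$ for this decomposition. I would handle it by induction on the orbit-closure order: closed unions of orbits give closed subvarieties $Z\subset Y$ of the fixed locus, and one uses the long exact sequence of compactly supported cohomology for $Y\setminus Z$, $Y$, $Z$. The fixed loci $X_\Sigma^g$ and $V(\sigma)^g$ are closed algebraic subvarieties, so the strata are ordered so that each partial union is closed. No properness assumption enters, because Lemma~\ref{lem:orbit-fixed} already handles arbitrary $g$ with $g\sigma=\sigma$.
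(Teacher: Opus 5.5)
Your proposal is correct and matches the paper's proof essentially step for step. Both decompose $X_\Sigma^g=\coprod_{g\sigma=\sigma}O(\sigma)^g$ using $gO(\sigma)=O(g\sigma)$, apply additivity of $\chi_c$ together with Lemma~\ref{lem:orbit-fixed}, and use compactness (the paper says projectivity) of $X_\Sigma^g$ to replace $\chi_c$ by $\chi$; the only addition is your sketch of additivity via the closed filtration by orbit closures and the long exact sequence, which the paper simply cites from Fulton.
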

\begin{proof}
Since $gO(\sigma)=O(g\sigma)$, the fixed locus decomposes as
\[
  X_\Sigma^g
  =\coprod_{g\sigma=\sigma}
   O(\sigma)^g.
\]
Additivity of compactly supported Euler characteristic (\cite[Section 4.5]{Fulton}) and
Lemma~\ref{lem:orbit-fixed} give
$$
  \chi_c(X_\Sigma^g)
  =\sum_{g\sigma=\sigma}
   \det\bigl(I-\psi_\sigma(g)\bigr).
$$
The fixed locus $X_\Sigma^g$ is closed in the projective variety $X_\Sigma$ hence is projective, so the ordinary and compactly supported Euler
characteristics agree. 
\end{proof}

\begin{remark}
In~\cite[Theorem~1.1 and equation~(3.6)]{Gui}, the first author proved the following graded version of the above formula
$$
  \mathcal{P}_{\Sigma}(g;q)
  :=\sum_{i\geq0}\operatorname{Tr}(g\mid H^{2i}(X_\Sigma))\cdot q^i
  =\sum_{g\sigma=\sigma}
   \det\bigl(qI-\psi_\sigma(g)\bigr).
$$
Again, no properness assumption is needed. 
After specializing $q=1$, we can also obtain the Euler characteristic interpretation of the character via Proposition \ref{prop:Euler from Det}. 
\end{remark}

In summary, we have the following. 

\begin{theorem}For any $g\in G$, we have
\begin{equation}\label{eq:final-fixed}
\operatorname{Tr}\big(g\mid H^{*}(X_\Sigma)\big)
= |S^g|=\chi(X_\Sigma^g)
=\sum_{g\sigma=\sigma}
   \det\bigl(I-\psi_\sigma(g)\bigr),
\end{equation}
where $S$ is any finite $G$-set such that \(H^*(X_\Sigma;\mathbb{C})\cong\operatorname{Map}(S,\mathbb{C})\)
as \(G\)-representations.
\end{theorem}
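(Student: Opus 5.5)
The final theorem is a summary of results already in the paper, so the plan is to assemble the chain of equalities in \eqref{eq:final-fixed} from three ingredients. Fix $g\in G$ and a finite $G$-set $S$ with $H^*(X_\Sigma;\mathbb{C})\cong\operatorname{Map}(S,\mathbb{C})$; such an $S$ exists by Theorem~\ref{thm:main}.

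For the first equality, I will use that isomorphic representations have equal characters. So it suffices to compute the trace of $g$ on $\operatorname{Map}(S,\mathbb{C})$. In the basis of indicator functions $\delta_s$, $s\in S$, the element $g$ acts by $\delta_s\mapsto\delta_{gs}$. This is a permutation matrix, and its trace is the number of diagonal entries equal to $1$, namely $|S^g|$. This is the (unlabelled) proposition stated just before Proposition~\ref{prop:char=Eulercharoffixedloci}.

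For the second equality, I will invoke Proposition~\ref{prop:char=Eulercharoffixedloci}. Because $H^{\mathrm{odd}}(X_\Sigma)=0$, the ungraded trace equals the Lefschetz number of $g$. The Hopf trace formula on a $g$-equivariant triangulation in which stable simplices are fixed pointwise then identifies this Lefschetz number with $\chi(X_\Sigma^g)$.

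For the third equality, I will invoke Proposition~\ref{prop:Euler from Det}. It stratifies $X_\Sigma^g$ by the fixed loci $O(\sigma)^g$ of the $g$-stable orbits and applies Lemma~\ref{lem:orbit-fixed} to each stratum. Since $X_\Sigma^g$ is compact, $\chi=\chi_c$, and additivity of $\chi_c$ gives the sum of determinants. As an independent consistency check, the third quantity also arises from Gui's graded formula at $q=1$, since the trace on $H^*$ is $\mathcal P_\Sigma(g;1)$.

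Because each link is an earlier result, no step is a real obstacle. The only point requiring care is the independence from the choice of $S$: any $S$ realizing the isomorphism gives the same $|S^g|$ because characters determine representations. The substantive content, namely existence of $S$ and the fixed-point computations, was already handled in Theorem~\ref{thm:main}, Proposition~\ref{prop:char=Eulercharoffixedloci} and Proposition~\ref{prop:Euler from Det}.
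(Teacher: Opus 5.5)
Your proposal is correct and matches the paper's proof, which likewise chains three earlier results: the permutation-matrix trace $|S^g|$ on $\operatorname{Map}(S,\mathbb{C})$, Proposition~\ref{prop:char=Eulercharoffixedloci}, and Proposition~\ref{prop:Euler from Det}. One small correction: independence from the choice of $S$ needs only that isomorphic representations have equal characters, not the converse statement that characters determine representations.
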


\begin{remark}\label{rem:Burnside}
Equation \eqref{eq:final-fixed} is an identity of characters. The character identity by itself does not produce
an actual finite $G$-set. This distinction is essential for non-cyclic groups: element-wise fixed-point counts detect only cyclic subgroups and do not in general establish effectivity in the Burnside ring. The support-function degeneration and invariant Morse approximation are what establish the existence of the effective finite $G$-set $S$.
\end{remark}

\end{document}